\newcommand{\ZZ}{\mathbb{Z}}
\newcommand{\RR}{\mathbb{R}}
\newcommand{\QQ}{\mathbb{Q}}
\newcommand{\Sh}{\mathrm{Sh}}
\newcommand{\Val}{\mathrm{Val}}
\newcommand{\N}{\mathrm{N}}
\newcommand{\Len}{\mathrm{Len}}
\begin{document}
\title{Reduced ideals in pure cubic fields}
\author{G. Tony Jacobs}

\begin{Abstract}

Reduced ideals have been defined in the context of integer rings in quadratic number fields, and they are closely tied to the continued fraction algorithm. The notion of this type of ideal extends naturally to number fields of higher degree. In the case of pure cubic fields, generated by cube roots of integers, a convenient integral basis provides a means for identifying reduced ideals in these fields. We define integer sequences whose terms are in correspondence with some of these ideals, suggesting a generalization of continued fractions.

\end{Abstract}

\maketitle

\section{Introduction}

Quadratic fields have been studied much more extensively than their cubic analogues. Mollin, Shanks, and others developed a body of theory relating continued fractions to the ``infrastructure'' of quadratic fields, that is, to information about ideals and fractional ideals in the sub-rings of algebraic integers in quadratic fields. Hermite famously asked whether there is something which, for cubic fields, does what continued fractions do for quadratic fields.\cite{hermite} This paper is a move towards understanding infrastructure in cubic fields, and we take the existing results on the infrastructure of quadratic fields as our inspiration.

We look forward to a theory of infrastructure that applies to all cubic fields, i.e., degree-$3$ extensions of the rational numbers. We limit our work here to complex cubic fields, because such a field, in its ring of integers, has a unit group whose rank as a free $\ZZ$-module is $1$; in a totally real cubic field, the unit group has $2$ free $\ZZ$-module generators. Among the complex cubic fields, we study in particular pure cubic fields, which are generated by cube roots of integers.

A fundamental idea in infrastructure is that of a ``reduced ideal'', and in Section 2, we begin to develop this notion in the cubic case. We identify a canonical presentation for ideals, and identify necessary and sufficient conditions, in terms of that presentation, for an ideal to be reduced. We establish that each field under consideration has only finitely many reduced ideals, and we provide an algorithm for finding all of them in a given field.

In Section 3 of this paper, we construct for each field under study certain sequences, including a periodic sequence of natural numbers, which generalizes certain aspects of the continued fraction algorithm.

One appendix proves a quotidian and technical result on $\ZZ$-modules which is used when defining a canonical presentation of ideals in Section 2, and second and third appendices provide Python code, for executing algorithms described herein.

\section{Reduced ideals in pure cubic fields}

In this paper, we work in pure cubic fields, i.e., fields of the form $\QQ(\alpha)$ where $\alpha^3=m$ for some positive cube-free integer $m$. Before proceeding at this level of specificity, we remark briefly on why this is our chosen purview.

We know from Dirichlet's unit theorem (see e.g. \cite[p. 346]{alacawilliams}) that a cubic field with three real embeddings (equivalently, positive discriminant) has a unit group of rank $2$, whereas a field with one real embedding and two complex embeddings (equivalently, negative discriminant) has a unit group of rank $1$. This makes those with complex embeddings, such as pure cubic fields, easier to study, especially when generalizing notions from real quadratic fields, which also have rank $1$ unit groups.

There is no loss of generality in choosing $m>0$, because $\QQ(\sqrt[3]{-m})=\QQ(\sqrt[3]{m})$.

Besides the structure of the unit group, we are working with the integral basis of each field under consideration. Among the cubic fields of negative discriminant, those of the form we study here have a particularly simple integral basis. A clear next step in generalization would be to address arbitrary cubic fields with negative discriminants; an integral basis of such fields is given, for example, in \cite{albert} and more briefly in \cite{alaca}. Another possible generalization would be to arbitrary fields with rank one unit groups, including imaginary quartic fields, as in \cite{buchmannwilliams}.

There are certain notions present in this paper, such as the ``shadow'' of an algebraic number, that would seem to generalize to number fields of arbitrary degree, with arbitrary unit group structure.

\subsection{Basic Results on Ideals}
We begin with some notation. Recall that a free $\ZZ$-module $M$ of rank $n$ is defined as a free abelian group on $n$ generators, with the operation \textit{scalar multiplication}, defined so that for $m\in M$ and for the scalar $n > 0 \in \ZZ$, we have $nm = (n-1)m + m$, $0m = 0$, and $(-n)m = -(nm)$.

\begin{remark}
\label{notation}
Throughout this work, we adopt the following notations: The free $\ZZ$-module $u_1\ZZ\oplus\cdots\oplus u_n\ZZ$ will be denoted $[u_1,\ldots,u_n]$. The greatest common divisor of the two integers $i$ and $j$ will be denoted $(i,j)$. Furthermore, we define the following variables for use throughout:

Let $h$ and $k$ be relatively prime, squarefree positive integers. Then $m=hk^2$ is a positive cube-free integer, and we note that every such integer can be expressed uniquely in this form. Set $\sigma=3$ if $m\equiv\pm 1\pmod{9}$, and $1$ otherwise. Let $\alpha$ be the unique real root of the $\QQ$-irreducible polynomial $x^3-m$; the field $K=\QQ(\alpha)$ is a degree $3$ extension of $\QQ$. Setting $\widehat{m}=\widehat{\alpha}^3=h^2k$, we obtain another expression for the same field: $\QQ(\widehat{\alpha})=\QQ(\alpha)$. We note that $\sigma$ is invariant under switching $h$ and $k$; to avoid redundancy, we adopt the convention that $h>k$.

The field $K$ has integral basis $\left\{1,\alpha,\theta=\frac{1}{\sigma}\left(k \pm k\alpha + \widehat{\alpha}\right)\right\}$ \cite[p.176]{alacawilliams}. In cases where $\sigma=1$, it doesn't matter which of the plus or minus is chosen; in our calculations, we use the plus in these cases.
\end{remark}

We consider $\mathcal{O}_K$, the ring of integers of $K$, as the free $\ZZ$-module $[1,\alpha,\theta]$. Recall that a $\ZZ$-submodule is a subset of a $\ZZ$-module that is, itself, a $\ZZ$-module. Now, a non-zero ideal in $\mathcal{O}_K$ is necessarily a $\ZZ$-submodule of full rank, but not every rank-$3$ $\ZZ$-submodule of $\mathcal{O}_K$ is an ideal. Our first results concern the understanding of ideals in terms of their structure as $\ZZ$-submodules. We begin with a quotidian but useful fact about free $\ZZ$-modules, the proof of which is elementary and found in the appendix.

\begin{lemma}
\label{modulelemma}
Let $M=[u_1,\ldots,u_n]$ be a free $\ZZ$-module of rank $n$. Let $M'\subseteq M$ be a submodule of full rank. Then we can write $M'=[a_{1,1}u_1, \ldots, a_{n,1}u_1+\cdots+a_{n,n}u_n]$, with all coefficients integral. Furthermore, we can suppose without loss of generality that, for $i=1,\ldots,n$, we have $a_{i,i}$ strictly positive, and for $j=i+1,\ldots,n$ we have $0\leq a_{j,i}<a_{i,i}$. Subject to these conditions, all $\frac{n(n+1)}{2}$coefficients are uniquely determined.
\end{lemma}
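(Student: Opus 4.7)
The plan is to establish, in order, (a) existence of a lower-triangular generating set, (b) reduction of the off-diagonal entries into the prescribed ranges, and (c) uniqueness of the resulting coefficients.

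For (a), I would use the filtration $M'_k := M' \cap [u_1,\ldots,u_k]$ and induct on $k$. The projection $\pi_k \colon [u_1,\ldots,u_k] \to u_k\ZZ$ onto the last factor sends $M'_k$ to an additive subgroup of the rank-one module $u_k\ZZ$, necessarily of the form $a_{k,k}u_k\ZZ$ for some non-negative integer $a_{k,k}$; the full-rank hypothesis on $M'$ forces $a_{k,k}>0$. Choosing a preimage $w_k \in M'_k$ of $a_{k,k}u_k$, any $w \in M'_k$ has $\pi_k(w) = c\,a_{k,k}u_k$ for some $c \in \ZZ$, so $w - cw_k \in M'_{k-1}$, giving $M'_k = M'_{k-1} + w_k\ZZ$. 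Iterating from $k=n$ down to $k=1$ yields a basis of $M'$ whose $k$-th element has the required lower-triangular form $a_{k,1}u_1 + \cdots + a_{k,k}u_k$.

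For (b), I would reduce the off-diagonal entries by Euclidean division, processing columns in the order $i = n-1, n-2, \ldots, 1$: for each $j>i$, replace $w_j$ by $w_j - \lfloor a_{j,i}/a_{i,i}\rfloor w_i$, landing the $(j,i)$-entry in $[0,a_{i,i})$. The right-to-left order is essential, because $w_i \in [u_1,\ldots,u_i]$ implies the operation alters only columns $1,\ldots,i$ of $w_j$; in particular, reductions already achieved in columns $i+1,\ldots,n-1$ are preserved by later passes.

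For (c), I would first observe that each diagonal entry $a_{i,i}$ is intrinsic to $M'$: a short triangular argument shows $\pi_i(M'_i) = a_{i,i}u_i\ZZ$, so $a_{i,i}$ depends only on $M'$. Given two reduced presentations $\{w_j\}$ and $\{w'_j\}$, the difference $w_j - w'_j$ lies in $M'_{j-1}$; expanding in the basis $w'_1,\ldots,w'_{j-1}$ and comparing $u_{j-1}$-coefficients gives $a_{j,j-1} - a'_{j,j-1} \equiv 0 \pmod{a_{j-1,j-1}}$, and since both sides lie in $[0,a_{j-1,j-1})$ they must be equal. Descending through $u_{j-2},\ldots,u_1$ by the same argument yields $w_j = w'_j$. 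The main obstacle is not conceptual but organizational: sequencing the reductions in (b) so earlier constraints are not destroyed, and framing the inductive descent in (c) so the modular and range constraints cooperate at each step.
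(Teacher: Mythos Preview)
Your proof is correct and follows essentially the same route as the paper's: both filter by $M'_k = M' \cap [u_1,\ldots,u_k]$, extract the diagonal entries as generators of the projected images, and reduce the off-diagonal entries by Euclidean division. The paper folds the reduction step into the induction (so $w_1,\ldots,w_{n-1}$ are already reduced before $w_n$ is built) rather than doing it as a separate pass, and its uniqueness argument is a one-line sketch of exactly the descent you spell out.
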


\begin{definition}
Let $I$ be a submodule of the ring of integers in $K=\QQ(\alpha)$, with $m,h,k,\alpha,\text{ and } \theta$ as above, and per lemma \ref{modulelemma}, let $I$ be written in the form $I=[a,b+c\alpha,d+e\alpha+f\theta]$ with $a,c,f>0$, $0\leq b<a$, $0\leq d<a$, and $0\leq e<c$. We refer to this expression as \textit{canonical form} for the submodule. The product $\N(I)=acf$ is uniquely determined by canonical form, and we define the \textit{norm of the submodule} to be this number. The smallest rational integer in the submodule, given by the number $a$ in canonical form, is defined as the \emph{length of the submodule}, sometimes denoted $\Len(I)$, and we will sometimes write $L$ instead of $a$. 
\end{definition}

We note that the norm we have defined is precisely the index of the $\ZZ$-submodule in the $\ZZ$-module $\mathcal{O}_K$. Thus, in cases where the submodule is an ideal, the norm is the same as the ideal norm, defined in the usual way \cite[p.221]{alacawilliams}. In this case, we may refer to the submodule's norm, length, and canonical form as the norm, length, and canonical form of the ideal. We will now establish a proposition that gives us a way to determine, from its canonical form, when a submodule of $\mathcal{O}_K$ is an ideal. We first prove a technical lemma that will simplify our notation:

\begin{lemma}
\label{pqrstlemma}
Let $m, h, k, \sigma \textit{ and } \pm$ be as in Remark \ref{notation}.
Then the numbers:

\begin{enumerate}
	\item $p=\frac{1}{\sigma}(hk\mp k^3)$
	\item $q=\frac{1}{\sigma}(k-k^3)$
	\item $r=\frac{1}{\sigma^2}(k^2\mp 2h + 1)$
	\item $s=\frac{1}{\sigma^2}(h\mp k^4)$
	\item $t=\frac{1}{\sigma}(k^3 + 2k)$
\end{enumerate}

are all integers.
\end{lemma}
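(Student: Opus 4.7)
If $\sigma=1$, every denominator equals $1$ and the statement is trivial, so assume $\sigma=3$. Then $hk^{2}=m\equiv\pm 1\pmod{9}$, so $3\nmid m$; since $k$ is squarefree this gives $\gcd(k,\sigma)=1$, and hence $\gcd(k^{2},\sigma^{2})=1$. This coprimality is the only number-theoretic input needed beyond the fact that $\{1,\alpha,\theta\}$ is an integral basis.

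The plan is to compute the two products $\alpha\theta$ and $\theta^{2}$, expand them in the integral basis $\{1,\alpha,\theta\}$, and read off the integrality of $p,q,r,s,t$ from uniqueness of that representation. The preparatory identities I need are $\alpha\widehat{\alpha}=hk$ and $\widehat{\alpha}^{2}=h\alpha$, both immediate from $\alpha^{3}=hk^{2}$ together with $\widehat{\alpha}=\alpha^{2}/k$. I also invert the defining relation for $\theta$ to obtain $\widehat{\alpha}=\sigma\theta-k\mp k\alpha$, which lets me rewrite any $\widehat{\alpha}$ that appears in a product as a rational combination of $1,\alpha,\theta$.

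A direct expansion, using these identities and collecting terms, then produces
\[
\alpha\theta \;=\; p + q\alpha \pm k^{2}\theta, \qquad \theta^{2} \;=\; -k^{2}r + s\alpha + t\theta,
\]
with $p,q,r,s,t$ exactly the quantities named in the lemma. Because $\alpha$ and $\theta$ lie in $\mathcal{O}_{K}=[1,\alpha,\theta]$, so do their products; uniqueness of representation in the integral basis forces every coefficient above to be an integer. This yields $p,q,s,t\in\ZZ$ at once, along with $k^{2}r\in\ZZ$; combining the latter with $\gcd(k^{2},\sigma^{2})=1$ gives $r\in\ZZ$.

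The only genuine obstacle is careful sign bookkeeping in the expansion of $\theta^{2}$: one must track consistently the $\pm$ chosen inside $\theta$ and verify that after substituting $\widehat{\alpha}=\sigma\theta-k\mp k\alpha$, the cross terms really do recombine to produce the $\mp$ appearing in the definitions of $p$, $r$, and $s$. Once the two identities $\alpha\widehat{\alpha}=hk$ and $\widehat{\alpha}^{2}=h\alpha$ are in place, everything else is a mechanical expansion.
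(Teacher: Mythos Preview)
Your argument is correct, but it follows a genuinely different route from the paper. The paper's proof is pure modular arithmetic: from $m=hk^{2}\equiv\pm 1\pmod 9$ it deduces $h\equiv\pm 1\pmod 3$, hence $h^{3}\equiv\pm 1\pmod 9$ and $k^{2}\equiv h^{2}\pmod 9$, and then checks each of the five divisibility statements by hand (e.g.\ $9r=k^{2}\mp 2h+1\equiv(h\mp 1)^{2}\equiv 0\pmod 9$). No use is made of the integral basis.

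Your approach instead leverages the integral basis $\{1,\alpha,\theta\}$ (already cited from Alaca--Williams in Remark~\ref{notation}) and reads off $p,q,r,s,t$ as entries of the regular representation: the identities $\alpha\theta=p+q\alpha\pm k^{2}\theta$ and $\theta^{2}=-k^{2}r+s\alpha+t\theta$ are precisely the third columns of the matrices $P_{1},P_{2}$ that the paper introduces in the \emph{next} proposition. This is more conceptual---it explains \emph{why} these particular combinations arise---and it avoids any ad hoc congruence manipulation, at the cost of depending on the external integral-basis result and requiring a separate coprimality step for $r$ (since only $k^{2}r$, not $r$ itself, appears as a coefficient). One small remark: the clause ``since $k$ is squarefree'' is not what forces $\gcd(k,3)=1$; that follows already from $3\nmid m=hk^{2}$.
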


\begin{proof}
In cases where $\sigma=1$ there is nothing to show. Thus, assume that $m=hk^2\equiv\pm 1\pmod{9}$, which implies that $h\equiv\pm 1\pmod{3}$, hence that $h^3\equiv\pm 1\pmod{9}$, and also that $k^2\equiv h^2\pmod{9}$. From these facts, we obtain the following:

\begin{enumerate}
	\item $3p = hk\mp k^3 = k(h\mp k^2) \equiv k(h\mp 1) \equiv 0\pmod 3$
	\item $3q = k-k^3 = k(1-k^2) \equiv k(0) \equiv 0\pmod{3}$
	\item $9r = k^2\mp 2h + 1 \equiv h^2\mp 2h + 1 \equiv (h\mp1)^2 \equiv 0\pmod{9}$
	\item $9s = h\mp k^4 \equiv h\mp h^4 \equiv h(1 \mp h^3) \equiv 0\pmod 9$
	\item $3t = k^3+2k = 3k - 3q \equiv 0\pmod{3}$
\end{enumerate}
\end{proof}

We now state our condition for a submodule to be an ideal. A note of perspective: This theorem is a list of $16$ divisibility conditions; in the corresponding theorem on quadratic fields, the number of conditions is $3$ \cite[p.9]{mollin}. In generalizing these results to broader classes of number fields, characterizing ideals in terms similar to these seems likely to become tortuously technical, assuming that the underlying theory and approach remain the same. 

\begin{proposition}[Identification of ideals]
\label{IDideal}
Take $m, h, k, \sigma, \pm, \alpha, \theta$ and $K$ as in Remark \ref{notation}, take $p, q, r, s$ and $t$ as in Lemma \ref{pqrstlemma}, and let $M=[a,b+c\alpha,d+e\alpha+f\theta]$ be a submodule of the $\ZZ$-module $\mathcal{O}_K$, in canonical form. Then $M$ is an ideal if and only if the following divisibility conditions are all satisfied:

\begin{enumerate}
	\item
	\begin{enumerate}
		\item $c|a$
		\item $c|b$
	\end{enumerate}
	\item
	\begin{enumerate}
		\item $f|a$
		\item $f|\sigma kc$
		\item $f|\sigma ke$
		\item $f|b \pm k^2c$
		\item $f|d \pm k^2e$
	\end{enumerate}
	\item
	\begin{enumerate}
		\item $cf|ae$
		\item $cf|be - cd$
		\item $cf|be \pm k^2ce$
		\item $cf|df + qf^2 - \sigma ke^2 \mp 2k^2ef$
		\item $cf|qef + sf^2 - de - tef \mp k^2e^2$
	\end{enumerate}
	\item
	\begin{enumerate}
		\item $acf|(k^2c^2 + b^2)f \mp k^2bcf - \sigma kc(be - cd)$
		\item $acf|(pc - qb)cf + (b \pm k^2c)(be - cd)$
		\item $acf|(pcf - k^2ce - bd - qbf)f \pm k^2f(2be - cd) + \sigma ke(be - cd)$
		\item $acf|(pce - rk^2cf - qbe - sbf)f +(d + tf \pm k^2e)(be - cd)$
	\end{enumerate}
\end{enumerate}
\end{proposition}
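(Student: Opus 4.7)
My plan is to exploit that a rank-$3$ submodule $M \subseteq \mathcal{O}_K = [1, \alpha, \theta]$ is an ideal if and only if $\alpha M \subseteq M$ and $\theta M \subseteq M$, since multiplication by $1$ is trivial and $\{1,\alpha,\theta\}$ generates $\mathcal{O}_K$ as a $\ZZ$-module. Because $M$ is itself $\ZZ$-generated by $a$, $b+c\alpha$, and $d+e\alpha+f\theta$, this reduces to six product-containment requirements: $\alpha u \in M$ and $\theta u \in M$ for each generator $u$. For any $x = x_0 + x_1 \alpha + x_2 \theta \in \mathcal{O}_K$, back-substitution in the triangular system $n_3 f = x_2$, $n_2 c + n_3 e = x_1$, $n_1 a + n_2 b + n_3 d = x_0$ yields a uniform membership test: $x \in M$ if and only if $f \mid x_2$, $cf \mid f x_1 - e x_2$, and $acf \mid cf x_0 - b f x_1 + (be - cd) x_2$. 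The six products therefore generate eighteen candidate divisibility conditions to process.

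Next I would build a multiplication table for $\{1,\alpha,\theta\}$. From $\alpha \widehat{\alpha} = hk$ one gets $\widehat{\alpha} = \alpha^2/k$, so the defining relation $\sigma \theta = k \pm k\alpha + \widehat{\alpha}$ inverts to $\alpha^2 = -k^2 \mp k^2 \alpha + \sigma k \theta$. Multiplying this through by $\alpha$ and by $\theta$ and substituting back gives $\alpha\theta = p + q\alpha \pm k^2 \theta$ and $\theta^2 = -r k^2 + s \alpha + t \theta$, whose coefficients are exactly the integers $p,q,r,s,t$ of Lemma \ref{pqrstlemma}; this is precisely the role that lemma plays in the proof. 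Expanding each of the six products $\alpha a$, $\alpha(b+c\alpha)$, $\alpha(d+e\alpha+f\theta)$, $\theta a$, $\theta(b+c\alpha)$, $\theta(d+e\alpha+f\theta)$ in the basis $\{1,\alpha,\theta\}$ via this table produces explicit triples $(x_0, x_1, x_2)$ in $a,b,c,d,e,f$ and the structural constants $p,q,r,s,t,k,\sigma$.

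Applying the membership test to each triple gives the eighteen candidate conditions; two collapse. The $f$-test applied to $\alpha a$ is $f \mid 0$ and is vacuous. The $cf$-test applied to $\alpha(b+c\alpha)$ reduces, after subtracting the multiple $\mp cfk^2$, to $cf \mid fb - \sigma kce$, which in the presence of (1b) $c \mid b$ is equivalent to (2c) $f \mid \sigma k e$; and (2c) also arises directly as the $f$-test for $\alpha(d+e\alpha+f\theta)$. After this clean-up the six products yield collectively exactly the sixteen stated conditions: $\alpha a$ yields (1a) and (1b); $\theta a$ yields (2a), (3a), (3b); $\alpha(b+c\alpha)$ yields (2b), (2c), (4a); $\alpha(d+e\alpha+f\theta)$ yields (2c) again, (3d), (4c); $\theta(b+c\alpha)$ yields (2d), (3c), (4b); and $\theta(d+e\alpha+f\theta)$ yields (2e), (3e), (4d). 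Since every step in the chain is an equivalence, both directions of the proposition follow simultaneously. The main obstacle I anticipate is bookkeeping rather than ideas: the $\pm/\mp$ signs from the definition of $\theta$ must be threaded consistently through the multiplication table and all six products, and the $acf$-tests for the four products involving $b+c\alpha$ or $d+e\alpha+f\theta$ produce long polynomial expressions that must be matched to the exact forms (4a)--(4d), not merely modulo multiples of $acf$.
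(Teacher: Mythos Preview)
Your proposal is correct and follows essentially the same approach as the paper. The paper packages the computation in matrix form---encoding multiplication by $\alpha$ and $\theta$ as matrices $P_1,P_2$, the generators of $M$ as the columns of a matrix $A$, and your back-substitution membership test as left-multiplication by an integer matrix $Q$ with $QA=\operatorname{diag}(acf,cf,f)$---but the resulting eighteen entries and their reduction to the sixteen stated divisibility conditions are exactly what you describe.
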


\begin{proof}
In order for $M$ to be an ideal, it must contain the products of $\alpha$ and $\theta$ with each of its $\ZZ$-generators. Thus, each of these products must be an integer combination of the generators. Multiplication by $\alpha$ and by $\theta$ are represented by the matrices:

\begin{equation*}
P_1 = \left[\begin{matrix} 0 & -k^2 & p \\ 1 & \mp k^2 & q \\ 0 & \sigma k & \pm k^2 \end{matrix}\right] \text{ and } P_2 = \left[\begin{matrix} 0 & p & -k^2r \\ 0 & q & s \\ 1 & \pm k^2 & t \end{matrix}\right],
\end{equation*}

respectively, with respect to our integral basis $\{1,\alpha,\theta\}$. We place the generators of $M$ in a matrix $A$, and we define a matrix $Q$ representing row operations that reduce $A$ as follows:

\begin{equation*}
A=\left[\begin{matrix} a & b & d \\  & c & e \\  &  & f \end{matrix}\right],\,\, QA = \left[\begin{matrix}acf &  &  \\  & cf &  \\  &  & f \end{matrix}\right]
\end{equation*}

We note that $Q$ is an integer matrix, invertible over $\QQ$, but not, in general, over $\ZZ$. Since we want the columns of $P_1A$ and $P_2A$ to be integer combinations of the columns of $A$, we use row reduction on the augmented matrix:

\begin{equation*}
\left[\begin{array}{c|cc} A & P_1A & P_2A \end{array}\right] \sim \left[\begin{array}{c|cc} QA & QP_1A & QP_2A \end{array}\right],
\end{equation*}

and check that the columns of $QP_1A$ and $QP_2A$ are integer combinations of the columns of $QA$. Since these products are unwieldy, we show results for the first two columns of $A$, and then the third:

\begin{align*}
QP_1\left[\begin{matrix} a & b \\ 0 & c \\ 0 & 0 \end{matrix}\right] &= \left[\begin{matrix} abf & -(k^2c^2 + b^2)f \pm k^2bcf + \sigma kc(be - cd) \\ af & bf \mp k^2cf - \sigma kce \\ 0 & \sigma kc \end{matrix}\right] \\
QP_2\left[\begin{matrix} a & b \\ 0 & c \\ 0 & 0 \end{matrix}\right] &= \left[\begin{matrix} a(be-cd) & (pc - qb)cf + (b \pm k^2c)(be - cd) \\ -ae & qcf - be \mp k^2ce \\ a & b \pm k^2c \end{matrix}\right] \\
QP_1\left[\begin{matrix} d \\ e \\ f \end{matrix}\right] &= \left[\begin{matrix} (pcf - k^2ce - bd - qbf)f \pm k^2f(2be - cd) + \sigma ke(be - cd) \\ df \mp k^2ef + qf^2 - \sigma ke^2 \mp k^2ef \\ \sigma ke \pm k^2f \end{matrix}\right] \\
QP_2\left[\begin{matrix} d \\ e \\ f \end{matrix}\right] &= \left[\begin{matrix} (pce - rk^2cf - qbe - sbf)f +(d + tf \pm k^2e)(be - cd) \\ qef + sf^2 - de \mp k^2e^2 - tef \\ d \pm k^2e + tf \end{matrix}\right]
\end{align*}

Again, in order for $M$ to be an ideal, all of the columns on the right sides of these equations must be integer combinations of the columns of $QA$. It is thus clear that our list of divisibility conditions is both necessary and sufficient for our result. This completes the proof.
\end{proof}

Using this result, we can fix $m$, thus choosing a field, and write down all ideals of a given length $L$. Recalling that the ideal $[a, b + c\alpha, d + e\alpha + f\theta]$ is called primitive if the greatest common divisor of the integers $a,b,c,d,e,f$ is $1$, it is also possible to list all primitive ideals with length $L$. We have written such an algorithm in Python, and it is available online.

\subsection{Reduced Ideals}
Next, we generalize the definition of a reduced ideal in a quadratic field (\cite[p.19]{mollin}) to fields of arbitrary degree. First, we define the ``shadow'' of a number.

\begin{definition}
Let $\beta$ be an algebraic number in a number field $K$, a finite extension of $\QQ$. Then we define the shadow of $\beta$, $\Sh(\beta)=\Sh_K(\beta)$, as the product of all of its algebraic conjugates for that field, excluding itself.
\end{definition} 

If $K$ is a quadratic extension of $\QQ$ and $\beta$ is irrational, then $\Sh(\beta)$ is simply the algebraic conjugate of $\beta$. If $K$ is a degree $n$ extension, and $\beta$ is rational, then $\Sh(\beta)=\beta^{n-1}$. In any case, we have that $\Sh(\beta)\cdot\beta=\N(\beta)$, where $\N(\beta)$ represents the usual norm of an algebraic number in a number field. This gives us that $\Sh(\beta)\in K$; we also note that if $\beta$ is an algebraic integer, then $\Sh(\beta)$ is also an algebraic integer.

We give two formulas for the shadow of a number, when that number is given in terms of our known integral basis for pure cubic fields:

\begin{proposition}
\label{ShadowProp}
Take $m, h, k, \sigma, \pm, \alpha, \widehat{\alpha}, \theta \textit{ and } K$ as in Remark \ref{notation}, and let $\beta \in K$, so $\beta = x + y \alpha + z \theta$ for some rational $x,y,z$. Set:

\begin{equation*}
\left[\begin{array}{c} \widetilde{x} \\[1mm] \widetilde{y} \\[1mm] \widetilde{z} \end{array}\right] = \left[\begin{array}{c} x + \frac{zk}{\sigma} \\[1mm] y \pm \frac{zk}{\sigma} \\[1mm] \frac{z}{\sigma} \end{array}\right] = \left[\begin{array}{ccc} 1 &  & \frac{k}{\sigma} \\[1mm]  & 1 & \pm\frac{k}{\sigma} \\[1mm]  &  & \frac{1}{\sigma} \end{array}\right]\left[\begin{array}{c} x \\[1mm] y \\[1mm] z \end{array}\right]
\end{equation*}

Then we have:

\begin{align*}
\Sh(\beta) &= \widetilde{x}^2 + \alpha^2\widetilde{y}^2 + \widehat{\alpha}^2\widetilde{z}^2 - \alpha\widetilde{x}\widetilde{y} - \widehat{\alpha}\widetilde{x}\widetilde{z} - \alpha\widehat{\alpha}\widetilde{y}\widetilde{z} \\
&= (\widetilde{x}-\widehat{\alpha}\widetilde{z})^2 -\alpha(\widetilde{x}-\widehat{\alpha}\widetilde{z})(\widetilde{y}-\tfrac{\widehat\alpha}{\alpha}\widetilde{z}) + \alpha^2(\widetilde{y}-\tfrac{\widehat{\alpha}}{\alpha}\widetilde{z})^2.
\end{align*}

Also, $\Sh(\beta)\geq 0$ for all $\beta\in K$.
\end{proposition}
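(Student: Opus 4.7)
The plan is as follows. First, I would use the identity $\theta = \sigma^{-1}(k \pm k\alpha + \widehat{\alpha})$ from Remark~\ref{notation} to rewrite $\beta = x + y\alpha + z\theta$ directly in the $\QQ$-basis $\{1,\alpha,\widehat{\alpha}\}$ of $K$. Routine substitution and collection of like terms gives $\beta = \widetilde{x} + \widetilde{y}\alpha + \widetilde{z}\widehat{\alpha}$ with $(\widetilde{x},\widetilde{y},\widetilde{z})$ exactly as defined in the statement; that is the content of the matrix equation. This reduces the problem to computing $\Sh(\beta)$ for an element written in the simpler basis.

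Next, I would write down the three embeddings of $K$ into $\CC$. Fixing a primitive cube root of unity $\omega = e^{2\pi i/3}$, the embeddings send $\alpha \mapsto \alpha,\,\omega\alpha,\,\omega^2\alpha$. Because $\alpha\widehat{\alpha} = hk$ is rational (equivalently $\widehat{\alpha} = \alpha^2/k$), they correspondingly send $\widehat{\alpha} \mapsto \widehat{\alpha},\,\omega^2\widehat{\alpha},\,\omega\widehat{\alpha}$. The two nontrivial conjugates of $\beta$ are therefore
\[
\beta_2 = \widetilde{x} + \omega\widetilde{y}\alpha + \omega^2\widetilde{z}\widehat{\alpha}, \qquad \beta_3 = \widetilde{x} + \omega^2\widetilde{y}\alpha + \omega\widetilde{z}\widehat{\alpha}.
\]
Expanding $\Sh(\beta) = \beta_2\beta_3$ produces nine terms whose coefficients are sums of powers of $\omega$; using $1+\omega+\omega^2 = 0$ and $\omega^3 = 1$, these collapse to the six monomials of the first displayed formula.

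For the second formula, I would set $U = \widetilde{x} - \widehat{\alpha}\widetilde{z}$ and $V = \widetilde{y} - (\widehat{\alpha}/\alpha)\widetilde{z}$ and expand $U^2 - \alpha UV + \alpha^2 V^2$; using $\alpha\cdot(\widehat{\alpha}/\alpha) = \widehat{\alpha}$ and $\alpha^2(\widehat{\alpha}/\alpha)^2 = \widehat{\alpha}^2$, a term-by-term comparison reproduces the first formula. A more conceptual alternative is to factor $U^2 - \alpha UV + \alpha^2 V^2 = (U + \omega\alpha V)(U + \omega^2\alpha V)$ and recognize the two factors as $\beta_2$ and $\beta_3$ directly.

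Finally, for non-negativity the cleanest route is to note that $K$ has signature $(1,2)$, so its two nontrivial embeddings are complex conjugates of each other; since $\overline{\omega} = \omega^2$, one checks immediately that $\overline{\beta_2} = \beta_3$, whence $\Sh(\beta) = \beta_2\overline{\beta_2} = |\beta_2|^2 \geq 0$. Equivalently, the second formula exhibits $\Sh(\beta)$ as a value of the real binary quadratic form $U^2 - \alpha UV + \alpha^2 V^2$, which has discriminant $-3\alpha^2 < 0$ and is therefore positive definite. I do not anticipate a real obstacle; the only bookkeeping nuisance is the $\pm$ arising from the case split on $m \bmod 9$, but that sign lives entirely in the change of coordinates $(x,y,z) \mapsto (\widetilde{x},\widetilde{y},\widetilde{z})$ and never enters the conjugate expansion itself.
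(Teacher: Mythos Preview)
Your proposal is correct and follows essentially the same route as the paper: rewrite $\beta$ in the basis $\{1,\alpha,\widehat{\alpha}\}$ to get the tilde coordinates, multiply the two non-identity conjugates using $\omega$, then recast the result in the displacement variables $U=\widetilde{x}-\widehat{\alpha}\widetilde{z}$, $V=\widetilde{y}-(\widehat{\alpha}/\alpha)\widetilde{z}$ and read off positive definiteness. Your additional remark that $\beta_3=\overline{\beta_2}$, hence $\Sh(\beta)=|\beta_2|^2\ge 0$, is a nice bonus the paper does not make explicit.
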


\begin{proof}
Note that we can write $\beta = x + y\alpha + z\theta = \widetilde{x} + \widetilde{y}\alpha + \widetilde{z}\widehat{\alpha}$.

Now let $\omega=e^{2\pi i/3}$. The algebraic conjugates of $\alpha$ are $\alpha\omega$ and $\alpha\omega^2$; the corresponding conjugates of $\widehat{\alpha}=\tfrac{\alpha^2}{k}$ are $\widehat{\alpha}\omega^2$ and $\widehat{\alpha}\omega$. Thus the algebraic conjugates of $\beta$ are $\widetilde{x}+\widetilde{y}\alpha\omega + \widetilde{z}\widehat\alpha\omega^2$ and $\widetilde{x}+\widetilde{y}\alpha\omega^2 + \widetilde{z}\widehat\alpha\omega$. Mutiplying these expressions together, we obtain our first formula.

Observing the form of each term, we note that, considered as an equation in $\RR^3$, our first formula vanishes along the line $\widetilde{x}=\alpha\widetilde{y}=\widehat{\alpha}\widetilde{z}$. We therefore rewrite it in terms of the displacements $\widetilde{x}-\widehat{\alpha}\widetilde{z}$ and $\widetilde{y}-\tfrac{\widehat{\alpha}}{\alpha}\widetilde{z}$, and we have the second formula.

Finally, it is clear from the second formula that the function $\Sh$ is a positive definite quadratic form in the variables $(\widetilde{x} - \widehat{\alpha}\widetilde{z})$ and $(\widetilde{y}-\tfrac{\widehat{\alpha}}{\alpha}\widetilde{z})$.
\end{proof}

 We are now ready to give a general definition of a reduced ideal.

\begin{definition}
Let $K$ be a degree $n$ extension of the rationals, let $I$ be a primitive ideal in its ring of integers, and let $L=\Len(I)$. We define $I$ to be a \emph{reduced} ideal if for all $\beta\in I$, the pair of inequalities $|\beta|<L$ and $|\Sh(\beta)|<L^{n-1}$ together imply that $\beta=0$.
\end{definition}

In our case, with $K=\QQ(\alpha)$, where $\alpha=\sqrt[3]{m}$, we have an explicit description of ideals, and we can determine whether an ideal is reduced by examining its canonical form. The following three results are inspired by Mollin's Theorem 1.4.1 and its corollaries in \cite{mollin}.

\begin{theorem}[Identification of reduced ideals]
\label{IDreduced}
Let $I=[L,b+c\alpha,d+e\alpha+f\theta]$ be the canonical form of a primitive ideal in $\mathcal{O}_K$, with $m, h, k, \sigma, \pm, \alpha, \widehat{\alpha}, \theta \text{ and } K$ as previously. Then $I$ is reduced if and only if for every integer pair $(y,z)\neq (0,0)$ satisfying:

\begin{enumerate}
	\item $0\leq z<\frac{\sigma L}{\widehat{\alpha}}$
	\item $f|z$
	\item $\left(\frac{\widehat{\alpha}}{\alpha}\mp k\right)\frac{z}{\sigma} - \frac{2L}{\sqrt{3}\alpha} \leq y \leq \frac{\sigma L - \widehat{\alpha}z + \sqrt{(\sigma L - \widehat{\alpha}z)(\sigma L + 3\widehat{\alpha}z)}}{2\sigma\alpha} \mp \frac{kz}{\sigma}$
	\item $c|y-\frac{ez}{f}$,
\end{enumerate}

we have the inequality:

\begin{equation*}
\left\lfloor\frac{Q-(\frac{ybf+zcd-zbe}{cf})}{L}\right\rfloor<\frac{P-(\frac{ybf+zcd-zbe}{cf})}{L}, 
\end{equation*}

where

\begin{align*}
P &= P(y,z) \\
&=\max\left\{\frac{-\sigma L - kz - \left(\alpha(\sigma y \pm kz) + \widehat{\alpha}z\right)}{\sigma},\frac{\sigma\alpha y \pm \alpha kz + \widehat{\alpha}z - 2kz - \sqrt{(2\sigma L)^2 - 3(\sigma\alpha y \pm \alpha kz - \widehat{\alpha}z)^2}}{2\sigma} \right\},
\end{align*}

and 

\begin{align*}
Q &= Q(y,z) \\
&=\min\left\{\frac{\sigma L - kz - \left(\alpha(\sigma y \pm kz) + \widehat{\alpha}z\right)}{\sigma},\frac{\sigma\alpha y \pm \alpha kz + \widehat{\alpha}z - 2kz + \sqrt{(2\sigma L)^2 - 3(\sigma\alpha y \pm \alpha kz - \widehat{\alpha}z)^2}}{2\sigma} \right\}.
\end{align*}
\end{theorem}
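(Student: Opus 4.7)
The plan is to reduce the reducedness test to a search over integer pairs $(y,z)$, one at a time. First, writing any $\beta \in I$ as $\beta = \xi L + \eta(b+c\alpha) + \zeta(d+e\alpha+f\theta)$ for integers $\xi,\eta,\zeta$ and comparing with the integral-basis expansion $\beta = x + y\alpha + z\theta$, I obtain $z = \zeta f$, $y = \eta c + \zeta e$, and $x = \xi L + \eta b + \zeta d$. Consequently $\beta \in I$ iff $f\mid z$, $c\mid y-ez/f$, and $x = \xi L + g(y,z)$ for some $\xi \in \ZZ$, where $g(y,z) = (ybf+zcd-zbe)/(cf)$. These three equivalences give conditions (2) and (4) and identify the quantity appearing under the floor.

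Next, for each admissible $(y,z)$ I would translate $|\beta|<L$ and $\Sh(\beta)<L^{2}$ (noting $\Sh(\beta)\geq 0$ by Proposition \ref{ShadowProp}) into an open interval for $x$. The real embedding $\beta = (x+\tfrac{zk}{\sigma}) + \alpha(y\pm\tfrac{zk}{\sigma}) + \widehat{\alpha}\tfrac{z}{\sigma}$ yields the linear-strip bounds forming the first entries of the max/min defining $P(y,z)$ and $Q(y,z)$. For the shadow bound I would view the quadratic form of Proposition \ref{ShadowProp} as a quadratic in $\widetilde{x}$ with $\widetilde{y},\widetilde{z}$ fixed; its roots at $\Sh(\beta)=L^{2}$ are $\tfrac{1}{2}\bigl((\alpha\widetilde{y}+\widehat{\alpha}\widetilde{z})\pm\sqrt{4L^{2} - 3(\alpha\widetilde{y}-\widehat{\alpha}\widetilde{z})^{2}}\bigr)$, and substituting $\widetilde{x}=x+zk/\sigma$, $\widetilde{y}=y\pm zk/\sigma$, $\widetilde{z}=z/\sigma$ and clearing denominators reproduces the second entries. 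Both inequalities then hold simultaneously iff $P<x<Q$; substituting $x=\xi L + g$, this becomes $\xi \in \left(\tfrac{P-g}{L},\tfrac{Q-g}{L}\right)$, and the floor inequality in the conclusion is a precise test for the non-existence of an integer $\xi$ in this open interval.

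Finally, I would justify that only the $(y,z)$ in conditions (1) and (3) need checking. The symmetry $\beta\mapsto-\beta$ preserves both $|\beta|$ and $\Sh(\beta)$, so I may restrict to $z\geq 0$, giving the lower end of (1). Non-emptiness of the $x$-interval then forces the radicand $4L^{2}-3(\alpha\widetilde{y}-\widehat{\alpha}\widetilde{z})^{2}$ to be positive---one half of which is the lower bound in (3)---and forces the linear-strip bounds to be compatible with the shadow-ellipse bounds; solving the resulting quadratic in $\alpha\widetilde{y}$, whose discriminant factors as $(L-\widehat{\alpha}\widetilde{z})(L+3\widehat{\alpha}\widetilde{z})$, yields the upper bound in (3), and non-negativity of that discriminant for $z \geq 0$ yields $\widehat{\alpha}z/\sigma \leq L$, i.e.\ the upper bound in (1). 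The main obstacle will be the algebraic bookkeeping required to match my derived formulas term-for-term with the statement, in particular the propagation of the $\pm$ signs inherited from the choice of $\theta$ and the closed-form derivation of the upper bound in (3) via intersecting the shadow ellipse with the norm-strip.
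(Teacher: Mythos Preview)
Your proposal is correct and follows essentially the same argument as the paper: the paper phrases the same analysis geometrically---mapping $\mathcal{O}_K$ into $\RR^3$, describing $|\beta|<L$ and $\Sh(\beta)<L^2$ as the region $R$ between two planes and inside an oblique elliptic cylinder, and reading off the $z$-bound, the $y$-bounds, and $P,Q$ as extremal coordinates of $R$---whereas you derive the identical bounds algebraically via discriminants and compatibility of the linear and quadratic $x$-intervals. The lattice bookkeeping (your derivation of conditions (2), (4), and the offset $g(y,z)$) and the floor test for an integer in $(\tfrac{P-g}{L},\tfrac{Q-g}{L})$ match the paper exactly.
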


\begin{proof}
Let $\phi:\mathcal{O}_K\to \RR^3$ be the additive homomorphism defined by $1\mapsto(1,0,0)$, $\alpha\mapsto(0,1,0)$ and $\theta\mapsto(0,0,1)$. This map is an isomorphism of the $\ZZ$-modules $\mathcal{O}_K$ and $\ZZ^3$, the latter of which is embedded in $\RR^3$.

Now, let $\beta\in I\subset\mathcal{O}_K$. Our condition that $|\beta|<\Len(I)$ transforms into the geometric condition that the point $\phi(\beta)$ lies strictly between two planes: $\widetilde{x}+\alpha\widetilde{y}+\widehat{\alpha}\widetilde{z}=\pm L$. (Here we take $\widetilde{x}, \widetilde{y}, \widetilde{z}$ as in \ref{ShadowProp}.) Our second condition, that $\left|\Sh(\beta)\right|<\Len(I)^2$, transforms into the geometric condition that the point $\phi(\beta)$ lies in the interior of the oblique elliptic cylinder given by $(\widetilde{x} - \widehat{\alpha}\widetilde{z})^2 - \alpha(\widetilde{x} - \widehat{\alpha}\widetilde{z})(\widetilde{y} - \tfrac{\widehat\alpha}{\alpha}\widetilde{z}) + \alpha^2(\widetilde{y} - \tfrac{\widehat{\alpha}}{\alpha}\widetilde{z})^2 = L^2$.

These conditions define an open region $R$, between two planes and inside an elliptic cylinder, which is bounded and symmetric about the origin; it contains the images of $0$ and of at most finitely many other elements in the ideal $I$. The ideal is reduced if and only if $\phi(\beta)\not\in R$ for every non-zero $\beta\in I$. It is thus sufficient to write conditions establishing that the interior of $R$ contains no $\phi$-images of non-zero ideal elements.

First, we can ignore the line $y=z=0$, which contains images of rationals, because it intersects the boundaries of $R$ at $(\pm L,0,0)$, and there are no non-zero rational ideal elements between these two points.

Now, the entire region $R$ satisfies $|z|\leq\sup\{z : z\in R\} = \frac{\sigma L}{\widehat{\alpha}}$, because this is the maximum $z-$coordinate of the intersection of our elliptic cylinder with either plane. By symmetry, and because images of ideal elements have integer coordinates in $x, y, z$, we need only consider integer values from $z=0$ to $z=\left\lfloor\frac{\sigma L}{\widehat{\alpha}}\right\rfloor$. Furthermore, images of ideal elements will have $z$-coordinates that are multiples of $f$. Thus we obtain conditions (1) and (2) of this theorem.

For each integer $z$ in that range, we can bound possible $y$-values with the inequalities given in condition (3). The lower bound is the minimum $y$-value attained by a point on the elliptic cylinder, and the upper bound is the maximum $y$-value of a point of intersection of the cylinder and the planes. The final condition, that $y-\frac{ez}{f}\equiv 0\pmod{c}$, simply restricts our checking to $y$ values where images of ideal elements occur.

For each $(y,z)$ pair that we check, we wish to verify that no image of an ideal element lies in region $R$ along the line corresponding to those $y$ and $z$ values. This can be expressed by saying that the first image to the left of the right edge of $R$ is also to the left of the left edge of $R$. We write that geometric condition algebraically as the inequality $\left\lfloor\frac{Q-(\frac{ybf+zcd-zbe}{cf})}{L}\right\rfloor<\frac{P-(\frac{ybf+zcd-zbe}{cf})}{L}$, where $P=P(y,z)=\inf\{x:(x,y,z)\in R\}$ and $Q=Q(y,z)=\sup\{x:(x,y,z)\in R\}$ are the $x$-coordinates of the left and right edges of $R$, respectively. This proves the theorem.
\end{proof}

Using the same geometric construction (and the same notation) from this proof, we establish the following corollaries:

\begin{corollary}[Lower bound]
\label{LB}
If $L\leq\min\{\alpha,\frac{\widehat{\alpha}}{\sigma}\}$, then $I$ is reduced.
\end{corollary}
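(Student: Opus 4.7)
The plan is to assume, for contradiction, that some nonzero $\beta = x + y\alpha + z\theta \in I$ satisfies $|\beta| < L$ and $|\Sh(\beta)| < L^2$, i.e., that $\phi(\beta)$ lies in the interior of the region $R$ constructed in the proof of Theorem \ref{IDreduced}, and to derive a contradiction from $L \leq \min\{\alpha,\widehat{\alpha}/\sigma\}$. First I would recycle the $z$-bound already established there: every point of $R$ satisfies $|z| < \sigma L/\widehat{\alpha}$, and the hypothesis $L \leq \widehat{\alpha}/\sigma$ makes this supremum at most $1$, forcing the integer $z$ to equal $0$.

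With $z = 0$, the ideal-structure of $I = [L, b+c\alpha, d+e\alpha+f\theta]$ forces $\beta = n_1 L + n_2(b + c\alpha)$ for some $n_1,n_2 \in \ZZ$, so the $\alpha$-coordinate is $y = n_2 c$. If $n_2 = 0$, then $\beta = n_1 L$, and $|\beta| < L$ immediately yields $n_1 = 0$ and $\beta = 0$; otherwise $|y| \geq c \geq 1$.

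Next I would apply the shadow formula from Proposition \ref{ShadowProp}, which for $z = 0$ simplifies to $\Sh(\beta) = (x - \alpha y/2)^2 + \tfrac{3}{4}\alpha^2 y^2$. Combining $\Sh(\beta) < L^2 \leq \alpha^2$ with $|y| \geq 1$ forces $y^2 < 4/3$, so $y = \pm 1$. In the case $y = 1$, the same inequality yields $(x - \alpha/2)^2 < \alpha^2/4$, so $0 < x < \alpha$, and then $|\beta| = x + \alpha > \alpha \geq L$ contradicts $|\beta| < L$; the case $y = -1$ is symmetric.

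The main obstacle is the last step: neither the shadow-ellipse bound nor the length-band bound, taken alone, eliminates $|y| = 1$ under the sole hypothesis $L \leq \alpha$. The two strict constraints must be combined carefully, and the resulting contradiction relies on the openness of $R$ to promote the weak inequality $L \leq \alpha$ into a strict one at the right moment.
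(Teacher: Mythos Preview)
Your argument is correct and follows the paper's approach: use $L\leq\widehat{\alpha}/\sigma$ to force $z=0$, then eliminate nonzero $y$, then finish on the line $y=z=0$. The only difference is in the middle step: the paper observes directly that the supremum of $|y|$ over $R\cap\{z=0\}$ equals $L/\alpha$ (the $y$-coordinate of the corner where the ellipse $\Sh=L^2$ meets the plane $\Val=L$), so $L\leq\alpha$ immediately gives $|y|<1$; you instead use the ellipse alone to cut down to $y\in\{-1,0,1\}$ and then dispatch $y=\pm 1$ by combining the two constraints explicitly. Both arrive at the same place, and your version has the virtue of making the interplay between the two inequalities fully visible, at the cost of one extra case. (Your detour through the ideal decomposition $\beta=n_1L+n_2(b+c\alpha)$ is harmless but unnecessary---all you need is that $y$ is an integer.)
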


\begin{proof}
Since $L\leq\frac{\widehat{\alpha}}{\sigma}$, then in the above theorem, the only $z$-value satisfying inequality (1) is $z=0$. The intersection of $R$ with the plane $z=0$ has maximum/minimum $y$-vales of $\pm \frac{L}{\alpha}$, so with $L<\alpha$ the only integer $y$-value in our region is $y=0$. As noted in the proof of the theorem, no images of non-zero ideal elements are found in the interior of $R$ along the line $y=z=0$.
\end{proof}

\begin{lemma}
\label{fbound}
Using the notation from the theorem, if $I=[L,b+c\alpha,d+e\alpha+f\theta]$ is a primitive ideal, then $f|\sigma k$.
\end{lemma}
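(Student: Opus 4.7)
The plan is to extract the claim directly from the divisibility conditions of Proposition \ref{IDideal} together with primitivity. Set $g = \gcd(f, \sigma k)$ and write $f = g \cdot (f/g)$, $\sigma k = g \cdot (\sigma k / g)$, where $\gcd(f/g, \sigma k/g) = 1$. My goal is to show that $f/g$ divides every coefficient of the canonical form, and then invoke primitivity.

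First I would use conditions (2)(d) and (2)(e) from Proposition \ref{IDideal}, namely $f \mid \sigma k c$ and $f \mid \sigma k e$. Dividing through by $g$ and using coprimality of $f/g$ with $\sigma k / g$, these yield $f/g \mid c$ and $f/g \mid e$. Next, conditions (2)(f) and (2)(g), $f \mid b \pm k^2 c$ and $f \mid d \pm k^2 e$, read modulo $f/g$ as $b \equiv \mp k^2 c \pmod{f/g}$ and $d \equiv \mp k^2 e \pmod{f/g}$; combined with the previous step, this forces $f/g \mid b$ and $f/g \mid d$. Finally, condition (2)(a), $f \mid L$, gives $f/g \mid L$, and trivially $f/g \mid f$.

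At this point $f/g$ is a common divisor of the six coefficients $L, b, c, d, e, f$. Since $I$ is primitive, $\gcd(L, b, c, d, e, f) = 1$, so $f/g = 1$, which is exactly $f \mid \sigma k$.

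There is no real obstacle here; the whole argument is a bookkeeping exercise on the divisibility conditions already proved in Proposition \ref{IDideal}, together with primitivity. The only thing to be a little careful about is invoking the coprimality $\gcd(f/g, \sigma k/g) = 1$ correctly when cancelling from the relations $f \mid \sigma k c$ and $f \mid \sigma k e$; everything else is immediate.
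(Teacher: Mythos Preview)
Your argument is correct and follows essentially the same route as the paper: extract a common divisor of all six canonical-form coefficients from the group-(2) divisibility conditions and then invoke primitivity. Your choice $g=\gcd(f,\sigma k)$ is in fact slightly slicker than the paper's $g=\gcd(f,k)$, which forces a small case split on whether $\sigma=3$ and $3\mid f'$; by folding $\sigma$ into $g$ from the start you avoid that detour entirely. One cosmetic point: your labels (2)(d)--(g) do not match the paper's numbering, which has only (2)(a)--(e); the conditions you need are (2)(b), (2)(c), (2)(d), (2)(e), and (2)(a), in that order.
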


\begin{proof}
Let $g=(f,k)$, so we can write $f=gf'$ and $k=gk'$ with $(f',k')=1$. Then it follows from the ideal conditions $f|\sigma kc$ and $f|\sigma ke$ that $f'|\sigma c$ and $f'|\sigma e$, respectively. In the case where $\sigma=3$ and $3|f'$, let $f''=\frac{f'}{3}$, else let $f''=f'$. Then we have that $f''$ divides $a,c,e \text{ and } f$. Examining the conditions $f|b \pm k^2c$ and $f|d \pm k^2e$, we see that $f''$ also divides $b \text{ and } d$.

Since I is primitive, these divisibility conditions give us that $f''=1$, so $f=g$ or $f=3g$, the latter only if $\sigma=3$ and $3|f'$. In either case, $f|\sigma k$, as claimed.
\end{proof}

\begin{theorem}[Upper bound]
\label{UB}
If $L>\frac{6\sqrt{3}m}{\pi}$, then $I$ is not reduced.
\end{theorem}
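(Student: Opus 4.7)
The plan is to apply Minkowski's convex body theorem to the lattice $\phi(I)\subseteq\ZZ^3$ and the region $R = \{\phi(\beta) : |\beta|<L,\ |\Sh(\beta)|<L^2\}$ from the proof of Theorem~\ref{IDreduced}. That region is open, bounded, centrally symmetric, and convex (the intersection of a slab and an elliptic cylinder), and by construction $I$ fails to be reduced exactly when $R$ contains a nonzero point of $\phi(I)$. Since $\phi(\mathcal{O}_K)=\ZZ^3$ and $[\mathcal{O}_K:I]=\N(I)=Lcf$, the sublattice $\phi(I)$ has covolume $Lcf$, so Minkowski's theorem produces such a point as soon as $\mathrm{Vol}(R) > 8Lcf$.

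To evaluate $\mathrm{Vol}(R)$, I would first pass to the $(\widetilde{x},\widetilde{y},\widetilde{z})$ coordinates of Proposition~\ref{ShadowProp}, and then substitute $u=\widetilde{x}-\widehat{\alpha}\widetilde{z}$, $v=\widetilde{y}-\tfrac{\widehat{\alpha}}{\alpha}\widetilde{z}$, $w=\widetilde{z}$. In the $(u,v,w)$ description $R$ becomes $\{u^2-\alpha uv+\alpha^2 v^2<L^2\}\cap\{|u+\alpha v+3\widehat{\alpha}w|<L\}$; the elliptic base has area $\frac{2\pi L^2}{\alpha\sqrt{3}}$ (its defining quadratic form has determinant $\tfrac{3\alpha^2}{4}$), and for fixed $(u,v)$ the slab cuts out a $w$-interval of length $\frac{2L}{3\widehat{\alpha}}$. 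The map $(u,v,w)\to(\widetilde{x},\widetilde{y},\widetilde{z})$ is unimodular, while $(x,y,z)\mapsto(\widetilde{x},\widetilde{y},\widetilde{z})$ has determinant $1/\sigma$, so the volume measured in the $(x,y,z)$ coordinates used by $\phi$ is
\[
\mathrm{Vol}(R)\;=\;\sigma\cdot\frac{2\pi L^2}{\alpha\sqrt{3}}\cdot\frac{2L}{3\widehat{\alpha}}\;=\;\frac{4\pi\sigma L^3}{3\sqrt{3}\,\alpha\widehat{\alpha}}\;=\;\frac{4\pi\sigma L^3}{3\sqrt{3}\,hk},
\]
using $(\alpha\widehat{\alpha})^3 = m\widehat{m}=(hk)^3$. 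Minkowski's inequality $\mathrm{Vol}(R)>8Lcf$ then reads $L^2 > \frac{6\sqrt{3}\,cfhk}{\pi\sigma}$.

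To finish, I would control $cf$ via results already at hand. Condition (1)(a) of Proposition~\ref{IDideal} gives $c\mid L$, hence $c\leq L$, and Lemma~\ref{fbound} gives $f\mid\sigma k$, hence $f\leq\sigma k$; together $cf\leq\sigma Lk$, so that $\frac{6\sqrt{3}\,cfhk}{\pi\sigma}\leq\frac{6\sqrt{3}\,Lhk^2}{\pi}=\frac{6\sqrt{3}\,Lm}{\pi}$. Under the hypothesis $L>\frac{6\sqrt{3}m}{\pi}$ we obtain $L^2>\frac{6\sqrt{3}\,mL}{\pi}\geq\frac{6\sqrt{3}\,cfhk}{\pi\sigma}$, Minkowski's hypothesis holds strictly, and the resulting nonzero $\beta\in I$ satisfies $|\beta|<L$ and $|\Sh(\beta)|<L^2$, showing that $I$ is not reduced. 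The substantive obstacle is the volume calculation---the factor $1/\sigma$ from the tilde-coordinate change must be tracked carefully---and the conceptual point is that the raw Minkowski estimate is linear in $cf$ and is converted into a bound depending only on $m$ precisely by invoking the divisibility constraints $c\mid L$ and $f\mid\sigma k$.
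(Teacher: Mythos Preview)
Your argument is correct and matches the paper's proof essentially line for line: the same region $R$, the same volume $\frac{4\pi\sigma L^3}{3\sqrt{3}\alpha\widehat{\alpha}}$ (the paper obtains it as perpendicular height times base-ellipse area in $(\widetilde{x},\widetilde{y},\widetilde{z})$ rather than your Fubini computation in $(u,v,w)$), the same Minkowski inequality against the covolume $\N(I)=Lcf$, and the same bounds $c\le L$, $f\le\sigma k$ from condition (1)(a) and Lemma~\ref{fbound} to convert the estimate into one depending only on $m$. The only cosmetic difference is that the paper phrases Minkowski contrapositively (``if $I$ is reduced then $\N(I)\ge\tfrac{1}{8}\mathrm{Vol}(R)$'') before deriving the incompatibility with $\N(I)\le\sigma kL^2$.
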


\begin{proof}
The region $R$ in Theorem \ref{IDreduced} is convex and symmetric about the origin, and we claim its volume is equal to $\frac{4\pi \sigma L^3}{3\sqrt{3}\alpha\widehat{\alpha}}$. First, in the basis $\{\widetilde{x},\widetilde{y},\widetilde{z}\}$ the perpendicular distance between the planes $\widetilde{x}+\alpha \widetilde{y}+\widehat{\alpha}\widetilde{z}=\pm L$ is $\frac{2L}{\sqrt{1+\alpha^2+\widehat{\alpha}^2}}$, and the area of the ellipse at each end of $R$ is $\frac{2\pi L^2\sqrt{1+\alpha^2+\widehat{\alpha}^2}}{3\sqrt{3}\alpha\widehat{\alpha}}$. That gives us a volume of $\frac{4\pi L^3}{3\sqrt{3}\alpha\widehat{\alpha}}$. Translating back to the basis $\{x,y,z\}$, we pick up a factor of $\sigma$.

It now follows from Minkowski's convex body theorem (see, e.g. page 306 in \cite{alacawilliams}) that if $I$ is reduced, then $\N(I)\geq \frac{\pi\sigma L^3}{6\sqrt{3}\alpha\widehat{\alpha}}$, or $\frac18$ the volume of $R$. On the other hand, since $I$ is primitive if it is reduced, we also have from our canonical form and from Lemma \ref{fbound} that $\N(I)=acf\leq \sigma kL^2$. These two inequalities are incompatible for $L>\frac{6\sqrt{3}m}{\pi}$, so we have our result.
\end{proof}

We note a lack of symmetry in this upper bound formula. Since the field $K$ is generated indifferently by $\sqrt[3]{m}$ or $\sqrt[3]{\widehat{m}}$ (where $m=hk^2$ and $\widehat{m}=h^2k$), it seems odd that our upper bound includes one or the other, and isn't simply in terms of $h$ and $k$. Indeed, if we swap $h$ and $k$, we would find that ideals cannot be reduced with length greater than $\frac{6\sqrt{3}\widehat{m}}{\pi}$, but since $m<\widehat{m}$, the stated result is stronger.

\begin{theorem}
\label{finiteness}
Let $K=\mathbb{Q}(\alpha)$ where $\alpha^3=m$, for a cube-free integer $m$. Then the ring $\mathcal{O}_K$ contains at least one, and only finitely many, reduced ideals.
\end{theorem}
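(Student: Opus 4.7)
The plan is to split the statement into its two parts---existence of at least one reduced ideal, and finiteness of the total count---and handle each using machinery already in place.

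For existence, I would exhibit the unit ideal $\mathcal{O}_K$ itself. In canonical form it is $[1,\alpha,\theta]$, so it is plainly primitive with length $L=1$. The definition of reduced then requires that no nonzero $\beta\in\mathcal{O}_K$ satisfies both $|\beta|<1$ and $|\Sh(\beta)|<L^{n-1}=1$. But every $\beta\in\mathcal{O}_K$ is an algebraic integer, and $\beta\cdot\Sh(\beta)=\N(\beta)\in\ZZ$, so a nonzero $\beta$ has $|\beta|\cdot|\Sh(\beta)|=|\N(\beta)|\geq 1$. Thus the two strict inequalities cannot hold simultaneously, and $\mathcal{O}_K$ is reduced. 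This route also has the advantage of bypassing the small case analysis on $\sigma$ that a direct appeal to Corollary \ref{LB} would invite (where one would need to verify $1\leq\min\{\alpha,\widehat{\alpha}/\sigma\}$ across all permissible $m$).

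For finiteness, I would combine Theorem \ref{UB} with a counting argument on canonical forms. Theorem \ref{UB} forces the length of any reduced ideal to satisfy $L\leq\frac{6\sqrt{3}m}{\pi}$, so only finitely many integer lengths occur. For each such $L$, an ideal in canonical form $[L,b+c\alpha,d+e\alpha+f\theta]$ must satisfy $c\mid L$ and $f\mid L$ by Proposition \ref{IDideal}(1a) and (2a), giving finitely many choices for $c$ and $f$; the canonical-form conventions further pin $0\leq b<L$, $0\leq d<L$, and $0\leq e<c$, so the remaining coefficients admit only finitely many values. Hence only finitely many ideals appear at each admissible length, and the total is finite.

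I do not anticipate any serious obstacle: existence reduces to a one-line observation about multiplicativity of norms on $\mathcal{O}_K$, and finiteness is packaged by the upper bound theorem together with the finite-parameter nature of canonical form. If desired, one could tighten the count slightly by also invoking Lemma \ref{fbound} to restrict $f$ to divisors of $\sigma k$, but this refinement is not needed for the qualitative statement.
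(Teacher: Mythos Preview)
Your proposal is correct and follows essentially the same outline as the paper: exhibit $\mathcal{O}_K$ for existence, then invoke Theorem \ref{UB} to bound the length and conclude finiteness.

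The only notable differences are in the finishing details. For existence, the paper simply asserts that $\mathcal{O}_K$ is reduced, whereas you supply the clean norm argument $|\beta|\cdot|\Sh(\beta)|=|\N(\beta)|\geq 1$; your version is more explicit and arguably should have been included in the paper. For finiteness, the paper takes a slightly different route after bounding $L$: it observes (from the proof of Theorem \ref{UB}) that $\N(I)=acf\leq \sigma k L^2$ is then also bounded, and cites the standard fact that only finitely many ideals share a given norm. Your direct count of canonical forms---bounding $c,f$ via the divisibility conditions $c\mid L$, $f\mid L$ and the remaining coefficients via the canonical-form inequalities---is more self-contained and avoids the external citation, at the cost of being marginally longer. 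Either argument closes the proof without difficulty.
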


\begin{proof}
The entire ring $\mathcal{O}_K$ is always a reduced ideal, so we have at least one. By the above theorem, the length $L$ of a reduced ideal is bounded, say $L<L_0$. Thus, in accordance with the observation made in our proof of Theorem \ref{UB}, its norm is also bounded, by $\sigma k L_0^2$. Since there are only finitely many ideals of a given norm, \cite[p.313]{alacawilliams} we have this result as well.
\end{proof}

The above results (\ref{IDreduced} - \ref{UB}) give us a way of efficiently computing a complete list of reduced ideals in the fields we have been studying. We check for them by examining ideals of each length less than the upper bound of Theorem \ref{UB}. For each length, we produce a list of ideals, per the remarks following Proposition \ref{IDideal}.

As long as the length is less than $\min\left\{\alpha,\frac{\widehat{\alpha}}{\sigma}\right\}$, each ideal of that length is necessarily reduced by Corollary \ref{LB}. For each length between this minimum value and the upper bound, we examine each ideal. For each one, we obtain a list of pairs $(y,z)$ satisfying conditions (1), (2), (3) and (4). For each such pair, we calculate $P$ and $Q$ and check our main condition from Theorem \ref{IDreduced}. In Appendix 2, we provide Python code that executes this algorithm.

(In the code, for the sake of efficiency, we are able to skip the calculation for some ideals that, based on their canonical form, cannot be reduced. In short, if either coefficient $c$ or $f$ is too small, then Minkowski's convex body theorem makes it impossible for the ideal to be reduced.)

The following definition affords a different characterization of reduced ideals which will prove useful.

\begin{definition}
Let $I$ be an ideal (or fractional ideal) in a number field. Then $\beta\in I$ is a \emph{minimal element of I} if $\left|\gamma\right|<\left|\beta\right|$ and $\left|\Sh(\gamma)\right|<\left|\Sh(\beta)\right|$ for $\gamma\in I$ together imply that $\gamma=0$.
\end{definition}

Now we can characterize reduced ideals in terms of minimal elements.

\begin{theorem}
Let $I$ be an ideal in a number field. Then $I$ is reduced if and only if there is some rational $q\in I$ that is a minimal element of $I$.
\end{theorem}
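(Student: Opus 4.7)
The plan is to prove both directions by taking $q = L = \Len(I)$ as the candidate rational minimal element, and exploiting the two basic observations that $L \in I \cap \ZZ$ is the smallest positive rational in $I$ and that for any rational $q$ in a degree-$n$ field, $\Sh(q) = q^{n-1}$, so $|\Sh(q)| = |q|^{n-1}$.

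For the forward direction, assume $I$ is reduced. Then $L \in I$ is a nonzero rational with $|L| = L$ and $|\Sh(L)| = L^{n-1}$. If $\gamma \in I$ satisfies $|\gamma| < |L|$ and $|\Sh(\gamma)| < |\Sh(L)|$, then the two strict inequalities $|\gamma| < L$ and $|\Sh(\gamma)| < L^{n-1}$ force $\gamma = 0$ by the definition of reduced. Hence $L$ itself is a rational minimal element of $I$.

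For the backward direction, assume there is a nonzero rational $q \in I$ which is minimal. (The zero element trivially satisfies the minimal condition vacuously, so the content of the hypothesis is that we have such a nonzero $q$; if the definition is read to allow $q=0$ this case is handled identically since the reduced condition then needs to be checked independently, but the useful case is $q \neq 0$.) Since $q \in I \subseteq \mathcal{O}_K$ is rational, $q \in \ZZ$, and since $L$ generates $I \cap \ZZ$, we have $L \mid q$, hence $|q| \geq L$ and $|\Sh(q)| = |q|^{n-1} \geq L^{n-1}$. Now let $\beta \in I$ satisfy $|\beta| < L$ and $|\Sh(\beta)| < L^{n-1}$. Combining these with the previous inequalities gives $|\beta| < |q|$ and $|\Sh(\beta)| < |\Sh(q)|$, so minimality of $q$ forces $\beta = 0$. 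Thus $I$ is reduced.

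There is essentially no technical obstacle here; the theorem is almost tautological once one notices the two ingredients above. The only subtle point worth being careful about is the degenerate possibility $q=0$ allowed by the literal reading of the definition of minimal element, and the fact that any rational in $I$ is in fact a rational integer divisible by $L$, which is what lets the chain of inequalities go through in the reverse direction.
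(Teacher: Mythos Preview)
Your proof is correct and follows exactly the approach the paper takes: the paper's own proof is a one-liner (``immediate from the definition; if $I$ is reduced then $q=\pm L(I)$ is minimal, and conversely''), and you have simply spelled out those details, including the observation that any rational $q\in I$ is an integer multiple of $L$, which makes the converse go through. Your parenthetical about the degenerate case $q=0$ is a fair observation about the statement rather than the proof---the intended reading is clearly $q\neq 0$---but it does not affect your argument.
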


The proof is immediate from the definition. In particular, if $I$ is reduced, then $q=\pm L(I)$ is a minimal element, and conversely.

\section{Periodic norm sequences}

As seen in Mollin's \textit{Quadratics}, the terms in a quadratic number's continued fraction expansion can be put in correspondence with a sequence of ideals, and the eventual periodicity of these sequences corresponds to the presence of finitely many reduced ideals in an equivalence class \cite[p. 44]{mollin}. We now develop a corresponding notion for a class of cubic numbers.

Throughout this section, let $h$, $k$, $\sigma$, $m$, $\alpha$, $\theta$, $\widehat\alpha$, $K$, and $\pm$ be as in Remark \ref{notation}, and for any triple $(x,y,z)$ define $(\widetilde{x},\widetilde{y},\widetilde{z})$ as in Proposition \ref{ShadowProp}. Let $\phi$ be the additive homomorphism defined in the proof of Theorem \ref{IDreduced}.
 
Define the functions $\Val,\Sh:\RR^3\to\RR$ by the formulas $\Val(x,y,z)= x+ \alpha y + \theta z$ and $\Sh(x,y,z)=(\widetilde{x}-\widehat\alpha\widetilde{z})^2 - (\widetilde{x}-\widehat\alpha\widetilde{z})(\alpha\widetilde{y}-\widehat\alpha\widetilde{z}) + (\alpha\widetilde{y}-\widehat\alpha\widetilde{z})^2$. Also define the function $\N(x,y,z)=\Sh(x,y,z)\Val(x,y,z)$. Then, for $\beta\in K$, we have $\Val(\phi(\beta))=\beta$, $\Sh(\phi(\beta))=\Sh(\beta)$ and $N(\phi(\beta))=N(\beta)$. Furthermore, if $(x,y,z)\in\QQ^3$, then $\Val(x,y,z)=\phi^{-1}(x,y,z)$, $\Sh(x,y,z)=\Sh(\phi^{-1}(x,y,z))$, and $\N(x,y,z)=\N(\phi^{-1}(x,y,z))$.

Taking $a$ and $b$ positive, define the region:

\begin{equation*}
R_{a,b}=\left\{(x,y,z)\in\RR^3 \,:\, \left|\Val(x,y,z)\right|<a,\,\Sh(x,y,z)<b \right\}.
\end{equation*}

This region is convex and symmetric about the origin. (In this notation, the region examined in Theorem \ref{IDreduced} is $R_{L,L^2}$.) We now use regions of this form to define, associated with $\alpha$, a sequence $\left(\beta_n\right)_{n\geq 0}$ of algebraic numbers, and a sequence $\left(N_n\right)_{n\geq 0}$ of integers.

Let $\beta_0=1$, and $P_0=\phi(\beta_0)=(1,0,0)$. We begin with $R_{a_0,b_0}=R_{1,1}$, a region with the point $P_0$ on its boundary, and with no non-zero lattice points in its interior. To find $P_{n+1}$, let $a_{n+1}$ be the maximum positive number such that $R_{a_{n+1},b_n}$ has no lattice point in its interior. Such a number is guaranteed by Minkowski's convex body theorem. We will actually encounter two lattice points at once, because of symmetry; take $P_{n+1}$ to be the one for which the function $\Val$ is positive. We have $a_{n+1}=\Val(P_{n+1})$; also set $b_{n+1}=\Sh(P_{n+1})$, and let $\beta_{n+1}=\phi^{-1}(P_{n+1})$.

\begin{definition}
The sequence $\left(\beta_n\right)_{n\geq 0}$ is the \emph{minimal sequence associated with $\alpha$}, and $\left(\N_n\right)_{n\geq 0}=(\N(P_n))$ is the \emph{norm sequence associated with $\alpha$}.
\end{definition}

We have Python code online that calculates the minimal sequence and norm sequence of $\alpha$ given an appropriate value for $m$.

We note that the minimal sequence of $\alpha$ is precisely the sequence of minimal elements of $\mathcal{O}_K$, starting with $\beta_0=1$ and proceeding through minimal elements in order of increasing absolute value. The algorithm could be modified to run backwards, by holding cylinder heights constant and increasing their widths to find new points. This would give us the rest of the positive minimal elements, those with absolute values between $0$ and $1$. However, as we shall see, the sequence we have defined contains all the information we need. We first note some useful facts:

\begin{proposition}
If $\beta$ is a minimal element in the ideal (or fractional ideal) $I$, and $\gamma$ is another field element, then $\gamma\beta$ is a minimal element in the ideal (or fractional ideal) $(\gamma)\cdot I$.
\end{proposition}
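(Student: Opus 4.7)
The plan is to reduce the claim to the minimality of $\beta$ in $I$ using the multiplicativity of absolute value and of the shadow function. First, I would dispose of the trivial case $\gamma = 0$, in which $\gamma\beta = 0$ and $(\gamma)\cdot I = \{0\}$, so minimality holds vacuously. Assume now $\gamma \neq 0$. Since $I$ is closed under multiplication by elements of $\mathcal{O}_K$ (or, for fractional ideals, by verifying on generators), the ideal product satisfies $(\gamma)\cdot I = \gamma I = \{\gamma\delta' : \delta' \in I\}$, and multiplication by $\gamma$ is a bijection $I \to (\gamma)\cdot I$. In particular, $\gamma\beta$ does lie in $(\gamma)\cdot I$, and every element of $(\gamma)\cdot I$ has a unique expression $\gamma\delta'$ with $\delta'\in I$.

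Next I would record the key computational fact that the shadow is multiplicative: since $\Sh(\eta)$ is defined as the product of the non-identity Galois conjugates of $\eta$, and each embedding is a ring homomorphism, we have $\Sh(\gamma\delta') = \Sh(\gamma)\Sh(\delta')$. Note also that $\Sh(\gamma) \neq 0$ because $\Sh(\gamma)\gamma = \N(\gamma)$ and a nonzero algebraic number has nonzero norm.

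Now suppose $\delta \in (\gamma)\cdot I$ satisfies $|\delta| < |\gamma\beta|$ and $|\Sh(\delta)| < |\Sh(\gamma\beta)|$. Write $\delta = \gamma\delta'$ with $\delta' \in I$. Dividing the first inequality by $|\gamma|$ gives $|\delta'| < |\beta|$, and dividing the second by $|\Sh(\gamma)|$ (using multiplicativity of both $|\cdot|$ and $\Sh$) gives $|\Sh(\delta')| < |\Sh(\beta)|$. Minimality of $\beta$ in $I$ then forces $\delta' = 0$, hence $\delta = 0$, so $\gamma\beta$ is minimal in $(\gamma)\cdot I$.

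There is really no significant obstacle here; the only subtlety worth flagging is the multiplicativity of $\Sh$, which follows at once from its definition as a product of Galois conjugates, together with the observation that $\Sh(\gamma)\neq 0$ when $\gamma\neq 0$, which justifies the division used to transfer the shadow inequality.
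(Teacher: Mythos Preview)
Your proof is correct and follows essentially the same approach as the paper, which simply remarks that the result is immediate from the multiplicativity of $|\cdot|$ and $\Sh$ on $K$. Your version just spells out the details (the $\gamma=0$ case, the bijection $I\to(\gamma)I$, and the division step) that the paper leaves implicit.
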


\begin{proof}
This follows immediately because the functions $\Sh:K\to\RR$ and $|\cdot|:K\to\RR$ are both multiplicative.
\end{proof}

\begin{remark}[Dirichlet's Unit Theorem]
The unit group of $K$, a number field of degree $3$ with one real embedding and one pair of complex embeddings (i.e., a cubic field with negative discriminant), is of the form $U_K=\{\pm\varepsilon_0^j | j\in\ZZ\}$, where $\varepsilon_0\in K$ is the fundamental unit of the number field, which satisfies $\varepsilon_0>1$. (See, e.g., \cite[p.346,p.362]{alacawilliams}.)
\end{remark}

Now we are ready to show that the norm sequence we have defined is indeed periodic.

\begin{theorem}
\label{periodic}
The norm sequence of $\alpha$ is periodic, and the minimal sequence has the property that $\beta_{i+l}=\varepsilon_0\beta_i$, where $l$ is the period of the norm sequence, and $\varepsilon_0$ is the fundamental unit of the field $K=\QQ(\alpha)$.
\end{theorem}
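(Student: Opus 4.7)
The plan is to identify the minimal sequence $(\beta_n)_{n\geq 0}$ with the strictly increasing enumeration of the positive minimal elements of $\mathcal{O}_K$ of absolute value at least $1$, and then to exploit that multiplication by the fundamental unit $\varepsilon_0$ is an order-preserving bijection on this set. First I would check that $\beta_0=1$ is minimal: any $\gamma\in\mathcal{O}_K\setminus\{0\}$ with $|\gamma|<1$ and $\Sh(\gamma)<1$ would have $|\N(\gamma)|<1$, forcing $\N(\gamma)=0$, which is impossible for a nonzero field element. The Proposition just before the theorem then yields that multiplication by the unit $\varepsilon_0$ bijectively carries minimal elements of $\mathcal{O}_K$ to minimal elements of $\mathcal{O}_K$, with inverse multiplication by $\varepsilon_0^{-1}$; since $\varepsilon_0>0$, this bijection is strictly order-preserving on the positive minimal elements.

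Next I would verify that $(\beta_n)$ enumerates precisely these positive minimal elements of absolute value at least $1$, in strictly increasing order of $|\cdot|$. Each $\beta_n$ from the algorithm is minimal (any competitor inside $R_{a_{n+1},b_{n+1}}$ would lie inside $R_{a_{n+1},b_n}$, which has no nonzero interior lattice points by construction), and the $a_n$ escape to infinity since any bounded region of $K$ contains only finitely many lattice points. For completeness, no positive minimal $\gamma$ can satisfy $|\beta_n|<|\gamma|<|\beta_{n+1}|$: the Pareto property of minimal elements (two comparable positive minimal elements cannot have the larger-in-absolute-value one also have larger shadow) forces $\Sh(\gamma)\leq b_n$, and in the strict case $\phi(\gamma)$ lies in the interior of $R_{a_{n+1},b_n}$, contradicting the maximality of $a_{n+1}$.

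With the enumeration in hand, $\varepsilon_0=\varepsilon_0\cdot 1$ is a positive minimal element with $\varepsilon_0>1$, so there is some $l\geq 1$ with $\beta_l=\varepsilon_0$. The shift formula $\beta_{i+l}=\varepsilon_0\beta_i$ then follows by induction on $i$: given $\beta_{i+l}=\varepsilon_0\beta_i$, the element $\varepsilon_0\beta_{i+1}$ is the next positive minimal element after $\beta_{i+l}$, since any intermediate $\gamma$ would, after division by $\varepsilon_0$, produce a minimal element strictly between $\beta_i$ and $\beta_{i+1}$. Norm periodicity is then immediate from $\N_{i+l}=\N(\varepsilon_0)\N_i=\N_i$, using that $\N(\varepsilon_0)=\varepsilon_0\cdot\Sh(\varepsilon_0)>0$ by Proposition \ref{ShadowProp} together with $|\N(\varepsilon_0)|=1$, so $\N(\varepsilon_0)=+1$. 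Finally, to see that $l$ is the minimal period, suppose $l'<l$ were also a period; then $\N_{l'}=\N_0=1$ would force $\beta_{l'}$ to be a unit of $\mathcal{O}_K$ with $\beta_{l'}>1$, so $\beta_{l'}=\varepsilon_0^j$ for some $j\geq 1$, giving $\beta_{l'}\geq\varepsilon_0=\beta_l$ and contradicting $l'<l$ by strict monotonicity of $(a_n)$.

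The step I expect to be the main obstacle is the enumeration argument in the second paragraph, specifically handling the boundary case in which a would-be intermediate $\gamma$ satisfies $\Sh(\gamma)=b_n$ exactly. Then $\phi(\gamma)$ sits on the cylinder boundary rather than in the interior of $R_{a_{n+1},b_n}$, so the algorithm's ``no interior lattice points'' property does not immediately rule it out. A careful treatment would require a finer argument, for example observing that two distinct positive lattice points with identical shadow and increasing absolute value cannot both belong to a minimal chain without forcing a contradiction via the norm identity $\N=\Val\cdot\Sh$.
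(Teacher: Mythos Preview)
Your proposal is correct and follows essentially the same approach as the paper: both use that multiplication by $\varepsilon_0$ permutes minimal elements, identify the minimal sequence with the increasing enumeration of positive minimal elements in $[1,\infty)$, and read off the shift formula $\beta_{i+l}=\varepsilon_0\beta_i$ and the exact period from the fact that no unit lies strictly between $1$ and $\varepsilon_0$. The paper's proof is considerably terser---it simply asserts the enumeration and does not address the boundary case $\Sh(\gamma)=b_n$ that you flag---so your version is, if anything, more careful than the original.
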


We note that this theorem is closely analogous to Proposition 2.6 from \cite{buchmannwilliams}.

\begin{proof}
Let $\varepsilon_0$ be the fundamental unit of $K$. Our first observation is that, if $\beta$ is any minimal element, then so is $\pm\varepsilon_0^j\beta$ for $j\in\ZZ$. So, the set of minimal elements is the set of all associates (unit multiples) of minimal elements on the interval $[1,\varepsilon_0)$. Let these elements be denoted $1=\beta_0<\cdots<\beta_{l-1}$. We know there are only finitely many, for a lattice can only intersect a compact region (the closure of $R_{\varepsilon_0,1}$) in finitely many points. Then the minimal sequence is of the form:

\begin{equation*}
(1=\beta_0,\ldots,\beta_{l-1},\varepsilon_0,\ldots,\varepsilon_0\beta_{l-1},\varepsilon_0^2,\ldots).
\end{equation*}

This sequence has the property that $\beta_{i+l}=\varepsilon_0\beta_i$, and taking norms, this gives us that $\N_{i+l}=\N_i$. Thus, we have periodicity. Furthermore, we know that $\N_0=1=\N_{l}=\N(\varepsilon_0)$, and since $\varepsilon_0$ is the fundamental unit, we know that $\N_i>1$ for any positive $i<l$. This gives us that the period of the norm sequence is precisely $l$.
\end{proof}

Now, let $\mathcal{M}$ be the set of elements in the minimal sequence of $\alpha$ on the interval $[1,\varepsilon_0)$, and let $\mathcal{R}$ be the set of reduced principal ideals in $K$. We construct functions $F:\mathcal{M}\to\mathcal{R}$ and $G:\mathcal{R}\to\mathcal{M}$, which we will show to be inverses. This will establish a bijection between our two sets.

First, let $\gamma$ be a minimal element of $\mathcal{O}_K$ satisfying $1\leq \gamma<\varepsilon_0$, and let $J$ be the fractional ideal generated principally by $\gamma^{-1}$. Since $J=(\gamma^{-1})\cdot\mathcal{O}_K$, then $1=\gamma^{-1}\gamma$ is minimal in $J$. Let $L$ be the least integer such that $I=(L)J=(\frac{L}{\gamma})$ is an integral ideal, which we note is primitive. Now, $L=L\cdot 1$ is minimal in $I$. Since $L$ is rational, then $I$ is reduced, and we set $F(\gamma)=I$.

In the other direction, let $I$ be a reduced principal ideal. Then $I=(\eta)$ for some integer $\eta>0$. Since $I$ is reduced, we have that $L=\Len(I)$ is minimal in $(\eta)$. Then $\widehat{\gamma}=L\eta^{-1}$, must be minimal in $(\eta^{-1})(\eta)=\mathcal{O}_K$. Let $j=-\lfloor\log_{\varepsilon_0}\widehat{\gamma}\rfloor$, and let $\gamma=\varepsilon_0^j\widehat{\gamma}$. Then $\gamma$ is a minimal element in $\mathcal{O}_K$ satisfying $1\leq\gamma<\varepsilon_0$, so we set $G(I)=\gamma$.

Since the ideal $I$ could be written as a principal ideal in more than one way, we need to check that $G$ is well-defined. However, if $I=(\eta')$, then we know that $\eta'=\eta\varepsilon_0^r$ for some integer $r$. Thus, in the above argument, we obtain a $\widehat{\gamma}'$ that is an associate of $\widehat{\gamma}$, and therefore an associate of the same $\gamma$. So, $G$ is well-defined.

\begin{theorem}
\label{bijection}
The functions $F$ and $G$ defined above are inverses, providing a bijection between the sets $\mathcal{M}$ and $\mathcal{R}$.
\end{theorem}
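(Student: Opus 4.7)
The plan is to verify directly that both compositions are identities: $G \circ F = \mathrm{id}_{\mathcal{M}}$ and $F \circ G = \mathrm{id}_{\mathcal{R}}$. Since $F$ and $G$ are built from essentially the same data---a generator of the principal ideal, its length $L$, and a unit adjustment---the verification reduces to a short calculation whose only nontrivial input is the primitivity of reduced ideals.

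I would start with the easier direction, $G \circ F = \mathrm{id}_{\mathcal{M}}$. Given $\gamma \in \mathcal{M}$, the ideal $I := F(\gamma) = (L/\gamma)$ has $L$ as its length: certainly $L = (L/\gamma)\cdot\gamma \in I$, and any strictly smaller positive rational integer in $I$ would contradict the minimality defining $L$ in the construction of $F$. Applying $G$ to $I$ using the natural generator $\eta = L/\gamma > 0$, we get $\widehat{\gamma} = L\eta^{-1} = \gamma$. Because $1 \le \gamma < \varepsilon_0$, the exponent $j = -\lfloor \log_{\varepsilon_0}\gamma\rfloor$ is zero, so $G(F(\gamma)) = \gamma$.

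For $F \circ G = \mathrm{id}_{\mathcal{R}}$, take $I \in \mathcal{R}$, write $I = (\eta)$ with $\eta > 0$, set $L = \Len(I)$, and let $\gamma = \varepsilon_0^j L\eta^{-1}$ as in the definition of $G$. Applying $F$ produces the ideal $(L'/\gamma)$, where $L'$ is the least positive integer making it integral. Since $\varepsilon_0^j$ is a unit,
\[
(L'/\gamma) = (L'\eta/L) = (L'/L)\cdot I,
\]
so the problem becomes: determine the least positive integer $L'$ with $(L'/L)\,I \subseteq \mathcal{O}_K$.

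The main obstacle is precisely showing $L' = L$, and it rests on primitivity. Writing $L'/L = p_1/p_2$ in lowest terms, the inclusion $(p_1/p_2)\,I \subseteq \mathcal{O}_K$ together with $\gcd(p_1,p_2) = 1$ forces $i \in p_2 \mathcal{O}_K$ for every $i \in I$, so that $(1/p_2)\,I$ is itself an integral ideal exhibiting the rational factor $p_2$ in $I$. Since reduced ideals are primitive by definition, we must have $p_2 = 1$; hence $L'/L \in \ZZ$ with $L'/L \ge 1$, and minimality of $L'$ gives $L' = L$. Therefore $F(G(I)) = (L/L)\cdot I = I$, completing the verification.
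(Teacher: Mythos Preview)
Your treatment of $F\circ G=\mathrm{id}_{\mathcal{R}}$ is correct and matches the paper's argument: both reduce the question to showing $L'=L$ via primitivity of the reduced ideal $I$, and your lowest-terms argument for $p_2=1$ is a clean way to phrase it.

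The gap is in your justification that $\Len(I)=L$ in the $G\circ F$ direction. You write that ``any strictly smaller positive rational integer in $I$ would contradict the minimality defining $L$ in the construction of $F$,'' but the minimality that \emph{defines} $L$ is that $L$ is the least positive integer with $L/\gamma\in\mathcal{O}_K$. A smaller integer $n\in I=(L/\gamma)$ only gives $n\gamma/L\in\mathcal{O}_K$, which says nothing about $n/\gamma$. Concretely, if one drops the hypothesis that $\gamma$ is minimal and takes $\gamma=2$, then $L=2$, $I=(1)=\mathcal{O}_K$, and $\Len(I)=1\neq L$; so your stated reason cannot be the operative one.

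What actually forces $\Len(I)=L$ is the part of the construction of $F$ that you did not invoke: because $\gamma$ is a minimal element of $\mathcal{O}_K$, the element $1=\gamma^{-1}\gamma$ is minimal in $(\gamma^{-1})$, hence $L=L\cdot 1$ is a \emph{minimal element} of $I=(L)(\gamma^{-1})$. Now if $0<n<L$ with $n\in I$, then $|n|<|L|$ and $\Sh(n)=n^{2}<L^{2}=\Sh(L)$, contradicting minimality of $L$ in $I$. Thus $\Len(I)=L$, and the rest of your computation $G(F(\gamma))=\varepsilon_0^{0}\gamma=\gamma$ goes through. The paper's proof uses exactly this, writing $\Len(L/\gamma)=L$ on the strength of the minimality of $L$ in $I$ established when $F$ was defined.
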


We note that this result mirrors Proposition 4.3 from \cite{buchmannwilliams}.

\begin{proof}
First, we calculate $F(G(I))$, where $I=(\eta)$ is a reduced principal ideal with length $L$. We have that $G(I)=\gamma$ where $\gamma$ is some associate of $\widehat{\gamma}=\frac{L}{\eta}$. To apply $F$, we must choose the smallest integer $L'$ such that $\left(L'\gamma^{-1}\right)$ in an integer ideal. We know that $I=(\eta)=\left(L\widehat{\gamma}^{-1}\right)=\left(L\gamma^{-1}\right)$ is an integer ideal, and furthermore, a primitive one because it is reduced. If $L'<L$, then $I$ would not be primitive, so we have $L'=L$, and

\begin{equation*}
F(G(I))=F(\gamma)=\left(\frac{L}{\gamma}\right)=\left(\frac{L}{\widehat{\gamma}}\right)=(\eta)=I,
\end{equation*}

as desired.

In the other direction, we consider $G(F(\gamma))$, where $\gamma$ is a minimal element in $\mathcal{O}_K$ satisfying $1\leq\gamma<\varepsilon_0$. Let $L$ be the least positive integer such that $I=\left(\frac{L}{\gamma}\right)$ is an integer ideal. Then:

\begin{equation*}
G(F(\gamma))=G\left(\left(\frac{L}{\gamma}\right)\right)=\varepsilon_0^j\frac{\Len(L/\gamma)}{L/\gamma}=\varepsilon_0^j\frac{L}{L/\gamma}=\varepsilon_0^j\gamma,
\end{equation*}

where $j=-\left\lfloor\log_{\varepsilon_0}\gamma\right\rfloor=0$. This completes our proof.
\end{proof}

The above result seems to scratch the surface of a theory of generalized continued fractions, i.e., sequences that are sensitive to the structure of cubic fields, analogously as ordinary continued fractions are sensitive to the structure of quadratic fields. This appears to be a possible area for further research.

\section{Appendix 1: Proof of lemma on Z-modules}

The following is a proof of Lemma \ref{modulelemma}:

\begin{replemma}{modulelemma}
Let $M=[u_1,\ldots,u_n]$ be a free $\ZZ$-module of rank $n$. Let $M'\subseteq M$ be a submodule of full rank. Then we can write $M'=[a_{1,1}u_1, \ldots, a_{n,1}u_1+\cdots+a_{n,n}u_n]$, with all coefficients integral. Furthermore, we can suppose without loss of generality that, for $i=1,\ldots,n$, we have $a_{i,i}$ strictly positive, and for $j=i+1,\ldots,n$ we have $0\leq a_{j,i}<a_{i,i}$. Subject to these conditions, all $\frac{n(n+1)}{2}$coefficients are uniquely determined.
\end{replemma}

\begin{proof}
Let $M=[u_1,\ldots,u_n]$, and let $M'\subseteq M$ have full rank. We note that, if $n=1$, there is nothing to show, and we proceed by induction on $n$.

Let $\widetilde{M}=[u_1,\ldots,u_{n-1}]$; then $M'\cap\widetilde{M}$ is a submodule of $\widetilde{M}$ with full rank. By induction, we have that $M'\cap\widetilde{M}=[w_1,\ldots,w_{n-1}]$ with each $w_i=a_{i,1}u_1 + \cdots + a_{i,i}u_i$, all coefficients $a_{i,j}$ integral. Furthermore, for $i=1,\ldots,n-1$, we have $a_{i,i}>0$, and for $j=i+1,\ldots,n-1$, we have $0\leq a_{j,i}<a_{i,i}$.

Now, we define the set $I=\{k\in\ZZ : ku_n\in M'\oplus u_1\ZZ\oplus\cdots\oplus u_{n-1}\ZZ\}$. We observe that $I$ is a non-zero ideal of $\ZZ$, so put $I=(a_{n,n})$. By the definition of $I$, we have integers $b_i\in\ZZ$, for $i=1,\ldots,n-1$, such that $\widehat{w}_n=b_1u_1 + \cdots + b_{n-1}u_{n-1} + a_{n,n}u_n \in M'$.

Using the division algorithm repeatedly, we can write:

\begin{align*}
b_{n-1} &= q_{n-1}a_{n-1,n-1} + a_{n,n-1} \\
b_{n-2} - q_{n-1}a_{n-1,n-2} &= q_{n-2}a_{n-2,n-2} + a_{n,n-2} \\
&\vdots \\
b_1 - q_{n-1}a_{n-1,1} - \cdots - q_2a_{2,1} &= q_1a_{1,1} + a_{n,1}
\end{align*}

We thus obtain an element of $M'$:

\begin{align*}
w_n &= \widehat{w}_n - q_{n-1}w_{n-1} - \cdots - 	q_1w_1 \\
&= a_{n,1}u_1 + \cdots + a_{n,n}u_n,\\
\end{align*}

with coefficients satisfying the required conditions. We must now show that the set $\{w_1,\ldots,w_n\}$ spans $M'$.

It is clear that $N=[w_1,\ldots,w_n]\subseteq M'$. For the reverse inclusion, take an element $m\in M'$, and write $m=k_1u_1+\cdots+k_nu_n$ in terms of our original integral basis. Now, $k_n\in I$, so $k_n=t_na_{n,n}$ for some integer $t_n$. Subtracting $m-t_nw_n$, we obtain $(k_1-t_na_{n,1})u_1 + \cdots + (k_{n-1}-t_na_{n,n-1})u_n-1$, an element of $M'\cap\widetilde{M}$.

By the induction hypothesis, this element can be written $m-t_nw_n = t_1w_1 + \cdots t_{n-1}w_{n-1}$, which puts:

\begin{equation*}
m = t_1w_1 + \cdots + t_nw_n \in N,
\end{equation*}

as desired.

To see that the expression is unique subject to our constraints, supppose that $M'$ is also given by $M'=[w_1',\ldots,w_n']$, with $w_1'=a_{1,1}'u_1, \ldots, w_n'=a_{n,1}'u_1 + \cdots + a_{n,n}'u_n$, and that the positivity and bounding constraints are satisfied by these coefficients. Examining the differences $w_i-w_i'\in M'$, we see that all coefficients must match, proving uniqueness. 
\end{proof}

\section{Appendix 2: Python code for listing ideals}

The following algorithm can list every primitive ideal up to the Minkowski bound, or it can list every reduced ideal, in a given pure cubic field. Each ideal is listed as an ordered sextuple $(a,b,c,d,e,f)$, where the entries are the coefficients of the ideal's canonical form.

\begin{lstlisting}
import math
from fractions import gcd

def kPart(n):
    #Input=integer
    #Output=largest integer whose square divides n
    kPart=int(math.floor(math.sqrt(n)))
    success=0
    while (kPart>1)*(success==0):
        if n%kPart**2!=0:
            kPart=kPart-1
        else:
            success=1
    return kPart

def cubepart(m):
    #Input=integer
    #Output=2-vector:
    #1st entry=cube-free part of m
    #2nd entry=largest integer whose cube divides m
    div=math.floor(math.exp(math.log(m)/3))
    success=0
    while (div>1)*(success==0):
        if m%div**3!=0:
            div=div-1
        else:
            success=1
    return(m//div**3,div)

def isIdeal(a,b,c,d,e,f,m):
    k=kPart(m)
    h=m//k**2
    N=a*c*f
    area=c*f
    sigma=1
    if m**2%9==1:
        sigma=3
    pm=1
    if (m%9==8):
        pm=0-1
    p=(h*k-pm*k**3)//sigma
    q=(k-k**3)//sigma
    r=(k**2-pm*2*h+1)//sigma**2
    s=(h-pm*k**4)//sigma**2
    t=(k**3+2*k)//sigma
    ideal=1
    if (d*f+q*f**2-sigma*k*e**2-pm*2*k**2*e*f)%area!=0:
        ideal=0
    else:
        if (q*e*f+s*f**2-d*e-pm*k**2*e**2-t*e*f)%area!=0:
            ideal=0
        else:
            if ((k**2*c**2 + b**2)*f - pm*k**2*b*c*f - sigma*k*c*(b*e - c*d))%(N)!=0:
                ideal=0
            else:
                if (c*f*(p*c-q*b)+(b+pm*k**2*c)*(b*e-c*d))%(N)!=0:
                    ideal=0
                else:
                    if (f*(p*c*f-k**2*c*e-b*d-q*b*f)+pm*k**2*f*(2*b*e-c*d)+sigma*k*e*(b*e-c*d))%(N)!=0:
                        ideal=0
                    else:
                        if (f*(p*c*e-r*k**2*c*f-q*b*e-s*b*f)+(d+t*f+pm*k**2*e)*(b*e-c*d))%(N)!=0:
                            ideal=0
    return ideal

def isPrimitive(a,b,c,d,e,f):
    primitive=1
    divisor=2
    while (divisor<=c)*(divisor<=f)*(primitive==1):
        if (a%divisor==0)*(b%divisor==0)*(c%divisor==0)*(d%divisor==0)*(e%divisor==0)*(f%divisor==0):
            primitive=0
        else:
            divisor=divisor+1
    return primitive

def isReduced(a,b,c,d,e,f,m):
    isReduced=1
    k=kPart(m)
    h=m//k**2
    sigma=1
    pm=1
    if m**2%9==1:
        sigma=3
        if m%9==8:
            pm=0-1
    alpha=math.exp(math.log(m)/3)
    alphahat=math.exp(math.log(k*h**2)/3)
    if a>min(alpha,alphahat/sigma):
        z=0
        maxZ=sigma*a/alphahat
        while (z<maxZ)*(isReduced==1):
            yMin=(alphahat/alpha - pm*k)*z/sigma - 2*a/(math.sqrt(3)*alpha)
            yMax=(sigma*a - alphahat*z + math.sqrt((sigma*a-alphahat*z)*(sigma*a+3*alphahat*z)))/(2*sigma*alpha) - pm*k*z/sigma
            y=math.ceil(yMin)
            while (y-e*z//f)%c!=0:
                y=y+1
            while (y<yMax)*(isReduced==1):
                if ((y**2+z**2)!=0):
                    P1=(0-sigma*a-k*z-(alpha*(sigma*y+pm*k*z)+alphahat*z))/sigma
                    P2=(sigma*alpha*y+pm*alpha*k*z+alphahat*z-2*k*z-math.sqrt((2*sigma*a)**2-3*(sigma*alpha*y+pm*alpha*k*z-alphahat*z)**2))/(2*sigma)
                    P=max(P1,P2)
                    Q1=(sigma*a-k*z-(alpha*(sigma*y+pm*k*z)+alphahat*z))/sigma
                    Q2=(sigma*alpha*y+pm*alpha*k*z+alphahat*z-2*k*z+math.sqrt((2*sigma*a)**2-3*(sigma*alpha*y+pm*alpha*k*z-alphahat*z)**2))/(2*sigma)
                    Q=min(Q1,Q2)
                    if math.floor((Q-(y*b*f+z*c*d-z*b*e)/(c*f))/a) >= (P-(y*b*f+z*c*d-z*b*e)/(c*f))/a:
                        isReduced=0
                y=y+c
            z=z+f
    return isReduced

m=int(input('Starting m: '))
maxM=int(input('Maximum for m: '))
OutputFlag=int(input('enter 0 for all primitive ideals within Minkowski range; 1 for reduced ideals only:'))
while m<=maxM:
    mValid=1
    print()
    if cubepart(m)[0]==1:
        print("m =",m, " is a perfect cube")
        mValid=0
    if (cubepart(m)[1]>1)*(mValid==1):
        mPrime=cubepart(m)[0]
        k=kPart(mPrime)
        h=mPrime//k**2
        print("m =",m, " is not cube-free. See m =", min(h**2*k,h*k**2))
        mValid=0
    k=kPart(m)
    h=m//k**2
    if (h<k)*(mValid==1):
        print("m =",m, " is redundant with m =",h**2*k)
        mValid=0
    if mValid==1:
        sigma=1
        pm=1
        if m**2%9==1:
            sigma=3
            if (m%9==8):
                pm=0-1
        yn=(sigma-1)//2
        print("m =",m,", sigma =",sigma,", pm =",pm,", k =",k)
        maxL=math.floor(6*math.sqrt(3)*m/math.pi)
        #That's the Minkowski bound, based on the volume of the region R
        Mink2DConst=(9+2*math.sqrt(3)*math.pi)/36
        #For use below, when applying Minkowski's convex body theorem to R intersect {z=0}
        print("maxL=",maxL)
        a=1
        while a<=maxL:
            #c=1
            c=math.ceil(a*Mink2DConst/(math.exp(math.log(m)/3)))
            #for c less than this bound, there is an ideal elemnt inside R for z=0 by Minkowski's convex body theorem
            #Indeed, the area of R intersect {z=0} is a^2(9+2sqrt(3)pi)/(9m^(1/3)), so the ideal fails to be reduced
            #whenever ac is less than one fourth of that, or when c<a(9+2sqrt(3)pi/(36m^(1/3))
            while (c<=a):
                if(a%c==0):
                    b=0
                    while b<a:
                        d=0
                        while d<a:
                            e=0
                            while e<c:
                                if k==1:
                                    fDiv=1
                                else:
                                    fDiv=abs(gcd(gcd(gcd(gcd(gcd(gcd(a,sigma*k),a*e//c),b+yn*pm*k**2*c),d+yn*pm*k**2*e),b*e//c-yn*d),b*e//c+yn*pm*k**2*e))
                                f=math.ceil(math.pi*a**2/(6*math.sqrt(3)*m*c))
                                #for f less than this bound, there is an ideal element inside R by Minkowski's convex body theorem
                                while (f<=fDiv):
                                    if (fDiv%f==0):
                                        if isIdeal(a,b,c,d,e,f,m):
                                            if isPrimitive(a,b,c,d,e,f):
                                                red=isReduced(a,b,c,d,e,f,m)
                                                if OutputFlag==0:
                                                    #print()
                                                    if red==1:
                                                        print("Primitive Ideal (",a,b,c,d,e,f,"), with N =",a*c*f," is a reduced ideal.")
                                                    else:
                                                        print("Primitive Ideal (",a,b,c,d,e,f,"), with N =",a*c*f," is not reduced.")
                                                else:
                                                    if red==1:
                                                        #print()
                                                        print("Reduced ideal: (",a,b,c,d,e,f,") has norm N =",a*c*f)
                                    f=f+1
                                e=e+1
                            d=d+1
                        b=b+c
                c=c+1
            a=a+1
    m=m+1 

\end{lstlisting}

\section{Appendix 3: Python code for generating norm sequences}

The following algorithm takes a cube-free integer $m$ as input, as well as an upper bound on $z$-values, and lists elements of the norm sequence for $\sqrt[3]{m}$ that have $\phi$-images with $z$-coordinates less than the upper bound.

\begin{lstlisting}
import math

def cubepart(m):
    #Input=integer
    #Output=2-vector:
    #1st entry=cube-free part of m
    #2nd entry=largest integer whose cube divides m
    div=math.floor(math.exp(math.log(m)/3))
    success=0
    while (div>1)*(success==0):
        if m%div**3!=0:
            div=div-1
        else:
            success=1
    return(m//div**3,div)

def kPart(n):
    #Input=integer
    #Output=largest ineger whose square divides n
    kPart=int(math.floor(math.sqrt(n)))
    success=0
    while (kPart>1)*(success==0):
        if n%kPart**2!=0:
            kPart=kPart-1
        else:
            success=1
    return kPart

def Value(x,y,z):
    xtilde=x+z*k/sigma
    ytilde=y+pm*z*k/sigma
    ztilde=z/sigma    
    Val=xtilde+alpha*ytilde+alphahat*ztilde
    return Val

def Shadow(x,y,z):
    xtilde=x+z*k/sigma
    ytilde=y+pm*z*k/sigma
    ztilde=z/sigma
    Sh=xtilde**2+alpha**2*ytilde**2+alphahat**2*ztilde**2-alpha*xtilde*ytilde-alphahat*xtilde*ztilde-alpha*alphahat*ytilde*ztilde
    return Sh

def Norm(x,y,z):
    xtilde=x+z*k/sigma
    ytilde=y+pm*z*k/sigma
    ztilde=z/sigma
    N=xtilde**3+h*k**2*ytilde**3+h**2*k*ztilde**3-3*h*k*xtilde*ytilde*ztilde
    return N
m=1
m=int(input('m:'))
while m!=0:
    mValid=1
    print()
    if cubepart(m)[0]==1:
        print("m =",m, " is a perfect cube")
        mValid=0
    if (cubepart(m)[1]>1)*(mValid==1):
        mPrime=cubepart(m)[0]
        k=kPart(mPrime)
        h=mPrime//k**2
        print("m =",m, " is not cube-free. See m =", min(h**2*k,h*k**2))
        mValid=0
    k=kPart(m)
    h=m//k**2
    if (h<k)*(mValid==1):
        print("m =",m, " is redundant with m =",h**2*k)
        mValid=0
    if mValid==1:
        sigma=1
        pm=1
        if m**2%9==1:
            sigma=3
            if (m%9==8):
                pm=0-1
        yn=(sigma-1)//2
        print("m =",m,", sigma =",sigma,", pm =",pm,", k =",k)
        maxZ=int(input('maximum z-value:'))
        alpha=math.exp(math.log(m)/3)
        alphahat=math.exp(math.log(h**2*k)/3)
        a=1
        b=0
        c=0
        bestList=[[1,0,0]]
        Val=1
        Sh=1
        N=Val*Sh
        print(bestList[0],",   Val=",Val,", Sh=",Sh,", N=",N)
        c=c+1
        while c<maxZ:
            currentBest=[]
            x0=math.floor((alphahat-k)/sigma*c)
            y0=math.floor((alphahat-pm*alpha*k)/alpha/sigma*c)
            if Shadow(x0,y0,c)<Sh:
                currentBest.append([x0,y0,c])
            if Shadow(x0,y0+1,c)<Sh:
                currentBest.append([x0,y0+1,c])
            if Shadow(x0+1,y0,c)<Sh:
                currentBest.append([x0+1,y0,c])
            if Shadow(x0+1,y0+1,c)<Sh:
                currentBest.append([x0+1,y0+1,c])
            currentBest.sort(key=lambda x:Value(x[0],x[1],x[2]))
            while len(currentBest)>0:
                if Shadow(currentBest[0][0],currentBest[0][1],currentBest[0][2])<Sh:
                    newBest=currentBest.pop(0)
                    Val=Value(newBest[0],newBest[1],newBest[2])
                    Sh=Shadow(newBest[0],newBest[1],newBest[2])
                    N=Norm(int(newBest[0]),int(newBest[1]),int(newBest[2]))
                    print(newBest,",   Val=",Val,", Sh=",Sh,", N=",N)
                    bestList.append(newBest)
                else:
                    currentBest.remove(currentBest[0])
            c=c+1
    m=int(input('m:'))
\end{lstlisting}

\section{Questions for further research}

\begin{itemize}
	\item Can we obtain analogous results for arbitrary cubic fields with negative discriminant? For quartic fields with rank-$1$ unit groups?
	\item Can we further extend the analogy between norm sequences and the continued fraction algorithm?
	\item What is the corresponding structure like when the rank of the unit group is greater than $1$?
\end{itemize}

\bibliographystyle{amsplain}

\bibliography{bibliography}

\end{document}